\numberwithin{equation}{section} 
\theoremstyle{plain}
\newcounter{nonum}
\def\AA{\mathbb{A}}
\def\CC{\mathbb{C}}
\def\PP{\mathbb{P}}
\def\QQ{\mathbb{Q}}
\def\ZZ{\mathbb{Z}} 
\def\FF{\mathbb{F}} 
\def\WW{\mathbb{W}} 
\def\A{{ A}}
\def\D{{ D}}
\def\F{{ F}}
\def\G{{ G}}
\def\I{{ I}}
\def\K{{ K}}
\def\L{{ L}}
\def\M{{ M}}
\def\N{{ N}}
\def\P{{ P}}
\def\SS{{ S}}
\def\T{{ T}}
\def\U{{ U}}
\def\W{{ W}}
\def\Z{{ Z}}
\def\Cc{\EuScript{C}}
\def\Kk{\EuScript{K}}
\def\Oo{\EuScript{O}}
\def\Ww{\EuScript{W}}
\def\Ga{\Gamma}
\def\La{\Lambda}
\def\Om{\Omega}
\def\a{\alpha} 
\def\b{\beta}
\def\g{\gamma}
\def\h{\varphi}
\def\k{\kappa}
\def\m{\mathfrak{m}}
\def\n{\eta}
\def\p{\mathfrak{p}}
\def\s{\sigma}
\def\w{\varpi}
\def\rp{\rangle}
\def\>{\geqslant}
\def\<{\leqslant}
\def\Hom{{\rm Hom}}
\def\GL{{\rm GL}}
\def\Gal{{\rm Gal}}
\def\Ker{{\rm Ker}}
\def\Ind{{\rm Ind}}
\def\ind{{\rm ind}}
\def\qlb{{\overline{\QQ}_\ell}}
\def\zlb{{\overline{\ZZ}_\ell}}
\def\flb{{\overline{\FF}_{\ell}}}
\def\ip{\boldsymbol{i}}
\def\rp{\boldsymbol{r}}
\def\r{{\textbf{\textsf{r}}}}
\def\dd{{\rm d}}
\def\ii{\iota}
\def\GG{{\bf G}}
\long\def\MSC#1\EndMSC{\def\arg{#1}\ifx\arg\empty\relax\else
     {\par\narrower\noindent%
     2010 Mathematics Subject Classification: #1\par}\fi}
\long\def\KEY#1\EndKEY{\def\arg{#1}\ifx\arg\empty\relax\else
	{\par\narrower\noindent Keywords and Phrases: #1\par}\fi}
\title{On modular rigidity for $\GL_n$}
\author{Nadir Matringe}
\address{Institute of Mathematical Sciences,
  NYU Shanghai,
  3663 Zhongshan Road North Shanghai,
  200062, China
  \&
  Institut de Math\'ematiques de Jussieu-Paris Rive Gauche,
  Universit\'e Paris Cit\'e,
  75205,
  Paris,
  France}
\email{nrm6864@nyu.edu, matringe@img-prg.fr}
\author{Alberto M\'{\i}nguez}
\address{
University of Vienna, 
Fakult{\"a}t f{\"u}r Mathematik,
Oskar-Morgenstern-Platz 1,
1090 Wien}
\email{alberto.minguez@univie.ac.at}
\author{Vincent S\'echerre}
\address{
Laboratoire de Math\'emati\-ques de Versailles, 
UVSQ, 
CNRS, 
Universit\'e Paris-Saclay,
78035 Versailles, France}
\email{vincent.secherre@uvsq.fr}
\begin{abstract}
Let $k$ be a global field and $\AA_{k}$ be its ring of ad\`eles.
Let $\ell$ be a prime number and~fix a field isomorphism from $\CC$ to 
$\qlb$. 
Let $\Pi_1$, $\Pi_2$ be cuspidal automorphic representations of 
$\GL_n(\AA_{k})$ for some integer $n\>1$.
In this paper, we study the following question:
assuming that~there~is~a~fi\-ni\-te~set $S$ of places of $k$ containing
all Archimedean places and all finite places above $\ell$ 
such~that,
for~all $v\notin S$,
the~local~components $\Pi_{1,v} \otimes_{\CC} \qlb$ and
$\Pi_{2,v} \otimes_{\CC} \qlb$ are
unramified and their Satake~para\-meters are congruent~mod~$\ell$,
are the local~components $\Pi_{1,w} \otimes_{\CC} \qlb$ and
$\Pi_{2,w} \otimes_{\CC} \qlb$ integral,
and~do their~reductions mod $\ell$ share an irreducible factor 
for all non-Archimedean places $w$ not dividing~$\ell$?
We show that,
under certain~conditions on $\Pi_1$, $\Pi_2$,
the answer is yes. 
We also give a simple proof when $k$ is a function field. 
\end{abstract} 
\begin{document} 

\maketitle

\MSC 
\EndMSC
\KEY 
Automorphic forms,
Congruences mod $\ell$,
Satake parameters,
Whittaker models, 
Automorphic representations
\EndKEY

\thispagestyle{empty}

\section{Introduction}

\subsection{}

Let $k$ be a number field and $\AA_{k}$ be its ring of ad\`eles.
Let $\Pi_1$ and $\Pi_2$ be cuspidal automorphic representations of
$\GL_n(\AA_{k})$ for some integer $n\>1$.
The rigidity
(or strong multiplicity~$1$)~theo\-rem asserts that,
if there is a finite set $S$ of places of $k$ containing all 
Archimedean places~such that, for all $v\notin S$, 
the local components $\Pi_{1,v}$ and $\Pi_{2,v}$ are unramified and 
have the same~Sata\-ke para\-me\-ter, 
then~$\Pi_1$ and $\Pi_2$ are isomorphic (\cite{PS,Cogdell,JS1,JS2}).
A similar result holds over function fields. 

\subsection{}

Now fix a field isomorphism $\iota$ from $\CC$ to an algebraic closure $\qlb$
of the field of $\ell$-adic~num\-bers for some prime number $\ell$,
and consider the collections of irreducible smooth~$\qlb$-representa\-tions
of $\GL_n(k_v)$ defined by
\begin{equation}
\label{gremond}
\pi_{i,v} = \Pi_{i,v} \otimes_{\CC} \qlb, 
\quad
i\in\{1,2\},
\end{equation}
where the tensor product is taken with respect to $\iota$, 
$v$ runs over all finite places of $k$
and $k_v$~is~the completion of $k$ at $v$.
As $\Pi_1$ and $\Pi_2$ are cuspidal,
these representations are generic (see \S\ref{defgen}). 

Suppose that there exists a finite set $S$ of places of $k$ containing
all~Archi\-me\-dean places and~all fi\-ni\-te places above $\ell$ such that,
for all $v\notin S$,
the following are satisfied:
\begin{enumerate}
\item 
the representations $\pi_{1,v}$ and $\pi_{2,v}$ are unramified
representations of $\GL_n(k_v)$,
\item
the Satake parameters $\s_{1,v}$ and $\s_{2,v}$ of these unramified
representations,
considered as~con\-jugacy classes of~semi\-simple elements of $\GL_n(\qlb)$,
have  their  characteristic  polynomials   $P_{1,v}(X)$  and  $P_{2,v}(X)$  in
$\zlb[X]$,
where $\zlb$ is the ring of integers of $\qlb$,
\item
the reductions of $P_{1,v}(X)$ and
$P_{2,v}(X)$ in $\flb[X]$ are equal,
$\flb$ being the residue field of $\zlb$.
\end{enumerate}

Assumption 2 is equivalent to saying that
the unramified representations $\pi_{1,v}$ and $\pi_{2,v}$ are~in\-te\-gral, 
that is, 
their $\qlb$-vec\-tors spa\-ces 
contain $\GL_n(k_v)$-stable $\zlb$-lattices~(see \S\ref{par231}).
One can~then consider their reductions mod~$\ell$,
denoted $\r_\ell(\pi_{1,v})$~and $\r_\ell(\pi_{2,v})$,
which are finite length, semisimple smooth $\flb$-representations 
of $\GL_n(k_v)$
(see Section \ref{henriette} for a precise definition of reduction mod~$\ell$).
As\-sump\-tion~3 is then equivalent to saying that the representations
$\r_\ell(\pi_{1,v})$ and $\r_\ell(\pi_{2,v})$ are equal
(see Remarks \ref{reductiontowinS} and \ref{redunramnongeneric}).

Now let $w$ be a finite place of $k$ not dividing $\ell$. 
Our first question is

\begin{enonce}{Question}
\label{Q1}
Are the irreducible representations $\pi_{1,w}$ and $\pi_{2,w}$ integral? 
\end{enonce}

Assume that this is the case. 
One can then form $\r_\ell(\pi_{1,w})$~and $\r_\ell(\pi_{2,w})$.
These representations may not be equal
(see Remark \ref{counterex} for an example),
but one may address the following~question.

\begin{enonce}{Question}
\label{Q2}
Do $\r_\ell(\pi_{1,w})$~and $\r_\ell(\pi_{2,w})$
have an irreducible component in common?
\end{enonce}

If $k$ is a totally real (respectively, CM) number field, 
and if $\Pi_1$, $\Pi_2$ are algebraic regular,~self\-dual
(respectively, conjugate-selfdual)
cuspidal automorphic representations,
then \cite{MSautomodl} Theorem 8.2 says that the answers to
Questions \ref{Q1} and \ref{Q2} are yes.
More precisely:
\begin{itemize}
\item 
the representations $\pi_{1,w}$ and $\pi_{2,w}$ are integral for all finite 
places $w$ of $k$ not dividing $\ell$, 
\item
their reductions mod $\ell$ have a unique generic irreducible component in 
common, 
\item
this unique common generic irreducible component occurs with 
multiplicity $1$.
\end{itemize}
Such a~result,
which can be thought of as a modular rigidity theorem,
has been used in \cite{MSautomodl}~in~or\-der to study the beha\-vior~of
local transfer for cuspidal $\qlb$-representations of quasi-split classical groups
with respect~to congruences mod $\ell$.

More generally, 
thanks to the results of \cite{HLTT,Scholze,Varma},
one can make the argument of the~proof of~\cite{MSautomodl} Theorem 8.2 
work with no duality assumption on $\Pi_1$ and $\Pi_2$:
if $k$ is a total\-ly real or~CM number field, and
if~$\Pi_1$, $\Pi_2$ are algebraic~regu\-lar, cuspidal automorphic 
representations, 
the~answers~to~Ques\-tions \ref{Q1} and \ref{Q2}
are still~yes; more~preci\-sely, the three 
properties above still~hold. (See~\S 
\ref{HLTTScholzeVarma} below~for a detailed 
argument,~which relies on the existence of a correspondence from 
algebraic regular~cus\-pidal automorphic representation to
Galois representations
with local-global compatibility at all finite places not dividing $\ell$.)

It is natural to ask whether the `totally real or CM' assumption on $k$, 
or the `algebraic regular' assumption on the representations $\Pi_1$ and $\Pi_2$,
or the cuspidality assumption,
can~be removed.
We~will~not investigate these questions in the present article. 

It is also natural to seek an elementary,
purely automorphic proof of such a modular rigidity theo\-rem,
avoiding the use of Galois representations and local-global
compatibility~theorems.~We will
study this~question~in the case of function fields, which is easier
since there are no~Archi\-me\-dean places.

\subsection{}

We now assume that $k$ is a function field of characteristic $p$,
with ring of ad\`eles $\AA_k$.~Recall
that we have fixed a field isomorphism $\iota$ from $\CC$ to $\qlb$ 
for some prime number $\ell$ which we~assume to be different from $p$.
In this article, we prove the following theorem
(see Theorem \ref{MAINTHM}).

\begin{theo}
\label{MAINTHMintro}
Let $\Pi_1$ and $\Pi_2$ be cuspidal~automor\-phic representations
of $\GL_n(\AA_{k})$.
Associa\-ted with them, there are the representations $\pi_{i,v}$
defined by \eqref{gremond}.
Suppose that there exists a~finite set $\SS$ of places of $k$~such
that, for all $v\notin\SS$, one has:
\begin{enumerate}
\item
the representations $\pi_{1,v}$ and $\pi_{2,v}$ are unramified,
\item
the characteristic polynomials of their Satake parameters are in $\zlb[X]$
and have the same reduction in $\flb[X]$.
\end{enumerate}
Let $w$ be a place of $k$. 
Then
\begin{itemize}
\item 
the representations $\pi_{1,w}$ and $\pi_{2,w}$ are integral,
\item
their reductions mod $\ell$ share a  generic irreducible compo\-nent,
\item
and such a generic~com\-ponent is unique and occurs with multiplicity $1$ 
in both reductions. 
\end{itemize} 
\end{theo}

This theorem can be easily deduced from L.~Lafforgue's
global Langlands correspondence~\cite{Lafforgue}
(see Remark \ref{RemLaf}).
Our purpose is to give a simple proof of Theorem \ref{MAINTHMintro}
which does not rely~on the Langlands corres\-pondence for function fields.
Our argument,
inspired from
Piatetski-Shapi\-ro's proof of the classical rigidity theorem 
\cite{PS,Cogdell} is described below.
We currently~do~not~know how to extend our argument to number fields. 

\subsection{}
\label{sidonierougon}

Before explaining the proof of Theorem \ref{MAINTHMintro},
we introduce our main local~ingre\-dients. 
Let $F$ be a non-Archimedean locally compact field of
residue characteristic $p$ (and~cha\-rac\-teristic $0$~or
$p$).~Fix a non-trivial smooth $\qlb$-character $\vartheta$ of $F$.

\begin{prop}
\label{proplocalintro}
Let $\pi_1$ and $\pi_2$ be integral generic irreduci\-ble
$\qlb$-representa\-tions of~$\GL_n(F)$.
Suppose that
there are functions $\W_1$ and $\W_2$
in the Whittaker models of $\pi_1$ and $\pi_2$ with respect~to $\vartheta$
satisfying the following conditions:
\begin{enumerate}
\item
$\W_1$ and $\W_2$ are $\zlb$-valued and $\W_1(1)=\W_2(1)=1$,
\item
the reductions of $\W_1(g)$ and $\W_2(g)$ in $\flb$ are equal for all $g\in\G$. 
\end{enumerate}
Then $\r_\ell(\pi_1)$ and $\r_\ell(\pi_2)$ share a 
generic irreducible component,
such a generic irreducible compo\-nent is unique
and it occurs with multiplicity $1$.
\end{prop}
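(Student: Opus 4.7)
The plan is to invoke the theory of integral Whittaker models in the $\ell$-modular setting. The central input is the following theorem of Vign\'eras: for any integral generic irreducible $\qlb$-representation $\pi$ of $\GL_n(F)$, the semisimple representation $\rl(\pi)$ admits a unique generic irreducible subquotient, appearing in $\rl(\pi)$ with multiplicity~$1$; moreover, any $\zlb$-valued Whittaker function $\W\in\Ww(\pi,\vartheta)$ reduces to an element $\overline{\W}$ lying in the Whittaker model $\Ww(\tau,\overline{\vartheta})\subset\Ind_U^\G(\overline{\vartheta})$ of this unique generic subquotient $\tau$. Denote by $\tau_i$ the unique generic subquotient of $\rl(\pi_i)$ for $i\in\{1,2\}$; the proposition then reduces to showing $\tau_1\simeq\tau_2$, the uniqueness and multiplicity-$1$ assertions being already contained in the statement above.

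By hypothesis, the common reduction $\overline{\W}:=\overline{\W}_1=\overline{\W}_2\in C^\infty(\G,\flb)$ satisfies $\overline{\W}(1)=1$ and the standard $(U,\overline{\vartheta})$-equivariance, where $U$ is the standard maximal unipotent subgroup of $\G$, so $\overline{\W}$ is a nonzero element of $\Ind_U^\G(\overline{\vartheta})$. Applying the second half of the result above to each $\pi_i$ then gives $\overline{\W}\in\Ww(\tau_i,\overline{\vartheta})$ for $i\in\{1,2\}$, so $\overline{\W}$ lies in the intersection $\Ww(\tau_1,\overline{\vartheta})\cap\Ww(\tau_2,\overline{\vartheta})$. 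Both intersecting spaces are irreducible $\flb[\G]$-submodules of $\Ind_U^\G(\overline{\vartheta})$, canonically isomorphic to $\tau_1$ and $\tau_2$ respectively, and sharing the nonzero vector $\overline{\W}$ forces them to coincide as subspaces of $\Ind_U^\G(\overline{\vartheta})$. Hence $\tau_1\simeq\tau_2$.

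The main obstacle is the non-trivial input from Vign\'eras, in particular the second part stating that the reduction of a $\zlb$-valued Whittaker function belongs to the Whittaker model of the unique generic subquotient. If this is not directly available as a black box in the exact form needed, one would deduce it by considering the $\zlb[\G]$-lattice $L_i\subset\Ww(\pi_i,\vartheta)$ generated by $\W_i$, passing to its reduction $L_i/\ell L_i$ (whose semisimplification is $\rl(\pi_i)$), and showing via a socle argument that the image of the resulting $\flb[\G]$-morphism $L_i/\ell L_i\to\Ind_U^\G(\overline{\vartheta})$ is exactly $\Ww(\tau_i,\overline{\vartheta})$: by Frobenius reciprocity every irreducible $\flb[\G]$-submodule of $\Ind_U^\G(\overline{\vartheta})$ is generic, so the socle of this image is a single copy of $\tau_i$, and the multiplicity-$1$ statement then rules out any non-trivial extension above it.
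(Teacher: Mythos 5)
Your strategy is essentially the paper's, and the first half of the input you attribute to Vign\'eras --- that $\r_\ell(\pi)$ has a unique generic irreducible factor, occurring with multiplicity $1$ --- is correct (the paper proves it as a separate lemma, existence coming from the non-vanishing of the Whittaker functional on any stable lattice and uniqueness from the theory of cuspidal supports). The gap is in the second half: it is \emph{not} true in general that a $\zlb$-valued $\W\in\Ww(\pi,\vartheta)$ reduces to an element of the Whittaker model $\Ww(\tau,\vartheta\otimes\flb)$ of the unique generic subquotient $\tau$. The reductions of the $\zlb$-valued Whittaker functions form the image of the reduced lattice inside $\Ind^\G_\N(\vartheta\otimes\flb)$; this image has $\Ww(\tau,\vartheta\otimes\flb)$ as its socle, but it may strictly contain that socle, with \emph{non-generic} constituents of $\r_\ell(\pi)$ sitting above $\tau$ (this is the ``essentially AIG'' phenomenon of Emerton--Helm: such reductions are reducible in general, already for integral unramified principal series of $\GL_2$ when $q\equiv 1\bmod\ell$). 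Your fallback argument does not repair this: Frobenius reciprocity only forces the \emph{socle} of the image to be generic, and ``the multiplicity-$1$ statement rules out any non-trivial extension above it'' is a non sequitur --- multiplicity one of the generic constituent says nothing about non-generic constituents, which can perfectly well occur as subquotients (though not as submodules) of $\Ind^\G_\N(\vartheta\otimes\flb)$. Consequently you cannot conclude that the common reduction $\overline{\W}$ lies in $\Ww(\tau_1,\vartheta\otimes\flb)\cap\Ww(\tau_2,\vartheta\otimes\flb)$, and the final step collapses.

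The repair is to drop any claim of irreducibility of the images, which is exactly what the paper does. Let ${\it\Lambda}_i$ be the $\zlb[\G]$-lattice generated by $\W_i$ and let $\overline{{\it\Lambda}}_i$ be its image in $\Ind^\G_\N(\vartheta\otimes\flb)$ under reduction of values; it is a quotient of ${\it\Lambda}_i\otimes\flb$, so every irreducible subquotient of $\overline{{\it\Lambda}}_i$ occurs in $\r_\ell(\pi_i)$. Your hypotheses give $\overline{\W}_1=\overline{\W}_2\neq 0$, so the intersection $\overline{{\it\Lambda}}_1\cap\overline{{\it\Lambda}}_2$ is non-zero; its socle is then non-zero, generic (being a submodule of the induced space), and each of its irreducible summands occurs in both $\r_\ell(\pi_1)$ and $\r_\ell(\pi_2)$. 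The uniqueness and multiplicity-one statement for each $\pi_i$ identifies this socle with the common generic constituent and finishes the proof.
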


Let $P$ be the mirabolic subgroup of $\GL_n$,
made of all matrices with~last~row $(0\ \dots\ 0\ 1)$,~and $N$
be its unipotent radical. 
We have the following remarkable~integrality~crite\-rion 
(see Proposi\-tion~\ref{lemHM}), 
which follows from \cite{HelmMossINVENT18} and \cite{MaMo}.

\begin{prop} 
\label{integralschwartz}
Let $\pi$ be a generic irreducible $\qlb$-representa\-tion of $\GL_n(F)$.
The following assertions are equivalent.
\begin{enumerate}
\item 
The representation $\pi$ is integral. 
\item
Given any function in the Whittaker model of $\pi$ with respect to $\vartheta$
whose restriction to~$P(F)$ is compactly supported mod $N(F)$,
this function is $\zlb$-valued on $\GL_n(F)$ 
if and only if it is $\zlb$-va\-lued on $P(F)$.
\end{enumerate}
\end{prop}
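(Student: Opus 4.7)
The plan is to prove the two implications separately, using as inputs the stable-lattice theorem of Helm--Moss on the Whittaker model and the integral Kirillov theory developed by Matringe--M\'{\i}nguez.

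For $(1)\Rightarrow(2)$, suppose $\pi$ is integral. Only the direction ``$\zlb$-valued on $P(F)$ implies $\zlb$-valued on $\GL_n(F)$'' is nontrivial. By Helm--Moss, the subspace $\Ww(\pi,\vartheta;\zlb)\subset\Ww(\pi,\vartheta)$ consisting of Whittaker functions taking values in $\zlb$ on all of $\GL_n(F)$ is a $\GL_n(F)$-stable $\zlb$-lattice in the Whittaker model. By Matringe--M\'{\i}nguez, restriction to $P(F)$ identifies those elements of $\Ww(\pi,\vartheta;\zlb)$ whose restriction to $P(F)$ is compactly supported mod $N(F)$ with the integral Schwartz space $\Ss(N(F)\backslash P(F),\vartheta;\zlb)$. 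Unwinding this identification yields the desired statement.

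For $(2)\Rightarrow(1)$, the strategy is to manufacture a stable lattice from the hypothesis. Let $L\subset\Ww(\pi,\vartheta)$ be the $\zlb$-submodule of Whittaker functions taking values in $\zlb$ on all of $\GL_n(F)$; it is plainly $\GL_n(F)$-stable. By hypothesis $(2)$ applied to Whittaker lifts of integral Schwartz functions on $N(F)\backslash P(F)$ (which sit inside the Kirillov model of any generic irreducible representation), the submodule $L$ contains a nonzero element. Since $\pi$ is irreducible, the subrepresentation $L\otimes_{\zlb}\qlb$ must equal the full Whittaker model. Finally, $L$ is $\ell$-adically separated: if $\W$ and $\ell^{-k}\W$ both lie in $L$ for every $k\ge 0$, then every value of $\W$ is divisible by arbitrarily high powers of $\ell$, forcing $\W=0$. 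Hence $L$ is a bona fide $\GL_n(F)$-stable $\zlb$-lattice in $\Ww(\pi,\vartheta)$, so $\pi$ is integral.

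The main obstacle is the bridge step in $(1)\Rightarrow(2)$, namely the identification of the ``compactly supported mod $N(F)$'' part of the Helm--Moss integral Whittaker lattice with the integral Schwartz space on $N(F)\backslash P(F)$. Once this input from MaMo is granted, the remaining argument is a formal stable-lattice construction.
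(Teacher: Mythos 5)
Your direction $(1)\Rightarrow(2)$ is fine and is essentially the paper's: the nontrivial implication is exactly \cite{MaMo} Corollary~4.3 (which in fact gives the statement for arbitrary Whittaker functions, with no compact-support hypothesis on the restriction to $P(F)$). The gap is in $(2)\Rightarrow(1)$. Your module $L$ of everywhere $\zlb$-valued Whittaker functions is indeed $G$-stable, nonzero by hypothesis~(2), and spans $\Ww(\pi,\vartheta)$ by irreducibility; and your separatedness argument correctly shows that $L$ contains no $\qlb$-line. But none of this makes $L$ a $G$-stable $\zlb$-\emph{lattice} in the sense required for integrality, namely a free $\zlb$-module generated by a basis of the space, equivalently an \emph{admissible} $\zlb[G]$-module containing a basis (see \S\ref{defint}). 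Over the non-Noetherian ring $\zlb$, whose value group is dense, a spanning $G$-stable submodule with $\bigcap_{k}\ell^{k}L=0$ can still fail to be admissible (its invariants under a compact open subgroup need not be finitely generated over $\zlb$) and can fail to be free on a basis: ``separated'' is not ``bounded'', and even ``bounded'' is not ``lattice''. The assertion that the existence of such an $L$ forces integrality is precisely the content one must prove, not a formality. Note also that \cite{Vigw} Theorem~2, which says that the everywhere-$\zlb$-valued Whittaker functions form a lattice, is proved under the hypothesis that $\pi$ is \emph{already} integral, so it cannot be invoked in this direction.

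This is exactly why the paper's proof of $(2)\Rightarrow(1)$ is not a formal lattice construction: it descends $\pi$ to a complete discretely valued field $K$ with Noetherian ring of integers $\Oo$, checks that the analogue of~(2) holds over $\Oo$ (the paper's Lemma~\ref{marmite}), invokes \cite{HelmMossINVENT18} Theorem~3.2 to deduce that the character of the integral Bernstein centre $\mathfrak{z}_{\it\Omega}$ acting on $\pi_K$ is $\Oo$-valued, uses \cite{HelmDMJ16} to produce an \emph{admissible} torsion-free co-Whittaker $\Oo[G]$-module $L$ with $L\otimes_{\Oo}K=\pi_K$, and then applies \cite{Datnu} Proposition~6.7 to the Jacquet module of $L$ to conclude that the central character of the cuspidal support is integral, which is the integrality criterion recalled in \S\ref{defint}. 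To repair your argument you would need to supply an equivalent finiteness input; it cannot be extracted from separatedness and irreducibility alone.
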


\subsection{}
\label{aristidemacquart}

We now introduce our main global ingredients.
Fix a continuous unitary complex charac\-ter $\psi$ of $\AA_k$,
and consider a cuspidal {automorphic form} $\h$ on $\GL_n(\AA_k)$.
Associated with it by~\eqref{psiw},
with respect to the choice of $\psi$,
there is~a~Whit\-ta\-ker function $W$ on $\GL_n(\AA_k)$.
Note that $\psi$ is valued in the group
$\mu_{p}$ of complex $p$th~roots~of $1$.
Let $Z$ be the centre of $\GL_n$.

Given any sub-$\ZZ[\mu_{p}]$-module $\I$ of $\CC$,
we prove in Section \ref{SEC3} that:
\begin{itemize}
\item 
if $W$ takes values in $I$ on $P(\AA_k)$, 
then $\h$ takes values in $I$ on $P(\AA_k)$
(Theorem \ref{GHthm}),
\item
if $\h$ takes values in $I$ on $Z(\AA_k)P(\AA_k)$, 
then $W$ takes values in $I$ on $\GL_n(\AA_k)$
(Theorem \ref{MSthm}). 
\end{itemize}

\subsection{}
\label{reineclaude}

We now consider two cuspidal~automor\-phic representations
$\Pi_1$, $\Pi_2$ of $\GL_n(\AA_{k})$ as in~Theo\-rem \ref{MAINTHMintro}.
Let $A$ be the local ring $\iota^{-1}(\zlb)$ and $\m$ be its maximal ideal.
Fix a place $w\in S$.

We first observe that
the central characters of $\Pi_1$ and $\Pi_2$ 
are $\A$-valued and congruent mod $\m$
(see Pro\-po\-sition \ref{PropAnimus}),
thanks to the information we have at all places $v\notin S$.

For each place $v$ of $k$,
let $\psi_v$ be the local component of $\psi$ at $v$.
It is a smooth character~of~$k_v$,
which we may assume,
for all $v\notin S$,
to be trivial on the ring of integers of $k_v$ but not on~the~in\-ver\-se of its
maximal ideal.

Let $W_{i,v}$ be any function in the~Whit\-ta\-ker~model~of~$\Pi_{i,v}$
with~respect to $\psi_v$ such that:
\begin{enumerate}
\item 
if $v\notin\SS$,
then $W_{i,v}$ is $\GL_n(\Oo_v)$-invariant
(see \S \ref{Wpi}),
\item
if $v\in\SS$, 
then $W_{1,v}$ and $W_{2,v}$ coincide on $\P(k_v)$,
and their restriction to $\P(k_v)$~is~com\-pactly supported mod $N(k_v)$
and $A$-valued (see \S \ref{defgen}),
\item 
and $W_{1,v}(1) = W_{2,v}(1) = 1$ for all $v\neq w$.
\end{enumerate}
The tensor product of the $W_{i,v}$ is
a function $W_i$ in the Whittaker model of $\Pi_i$ with respect~to~$\psi$. 
First, 
it follows from the Shintani formula (see \cite{Shintani,Cogdell}
and Propo\-sition \ref{pagniez})
that, for all $v \notin S$:
\begin{itemize}
\item 
the functions $W_{1,v}$ and $W_{2,v}$ are $A$-valued, 
\item 
the difference $W_{1,v} - W_{2,v}$ is $\m$-valued.
\end{itemize} 
The functions $W_1$ and $W_2$ are thus $A$-valued and 
$W_1-W_2$ is $\m$-va\-lued~on $\Z(\AA_k)\P(\AA_k)$. 

Since $\A$ and $\m$ are sub-$\ZZ[\mu_{p}]$-modules of $\CC$,
we may apply the result~of~Pa\-ra\-graph~\ref{aristidemacquart},
from which we deduce that the functions $W_1$ and $W_2$ are $A$-va\-lued
and $W_1-W_2$ is $\m$-va\-lued on the whole of $\GL_n(\AA_k)$. 
Consequently,
thanks to Assumption (3) above, we get:
\begin{itemize}
\item[$\phantom{\star}(\star)$]
the functions $W_{1,w}$ and $W_{2,w}$~are $A$-va\-lued, 
\item[$(\star\star)$]
the difference $W_{1,w}-W_{2,w}$ is $\m$-valued. 
\end{itemize}
Starting with any function in the~Whittaker model of~$\pi_{i,w}$
with respect~to~$\psi_w$
whose~restriction to~$P_w$ is compactly supported mod $N_w$
and $\zlb$-valued,
we thus proved 
that this func\-tion is~$\zlb$-va\-lued (see $(\star)$ above).
Applying Pro\-po\-si\-tion~\ref{integralschwartz}, 
we deduce that $\pi_{1,w}$ and $\pi_{2,w}$~are~integral,~pro\-ving~the first 
assertion of Theorem \ref{MAINTHMintro}. 

Now assume that $W_{1,w}$ and $W_{2,w}$ satisfy the additional condition 
$W_{1,w}(1) = W_{2,w}(1) = 1$.~The
remaining two assertions of Theorem \ref{MAINTHMintro} then follow from
$(\star)$ and $(\star\star)$ by Pro\-po\-sition \ref{proplocalintro}.

\section*{Acknowledgments}

We would like to thank 
Guy~Henniart,
Harald~Grobner, 
Rob Kurinczuk, 
Gil Moss and
Hongjie Yu
for stimulating~dis\-cus\-sions about this work.

This work was partially supported by 
the Erwin Schr\"odinger Institute in Vienna,
when we~be\-nefited from a 2020 Research in Teams grant.
We thank the institute for hospitality, 
and for~ex\-cel\-lent working conditions.

The research of A. M\'inguez was partially funded by the
Principal Investigator
project PAT\-4832423 of the Austrian Science Fund (FWF).

V.~S\'echerre also thanks the Institut Universitaire de France
for support and ex\-cel\-lent working conditions
when this work was done.

\section{Local considerations}
\label{henriette}

In this section, 
$\F$ denotes a locally compact non-Archimedean field of residue characteristic 
$p$ and $n\>1$ is a positive integer.
We write $\Oo_\F$ for the ring of integers of $\F$ and $\p_\F$ for its maximal 
ideal.
We also write $\G$ for the locally profinite group $\GL_n(\F)$.

Let $\ell$ be a prime number different from $p$.
We write $\qlb$ for an algebraic closure of the field of $\ell$-adic integers,
$\zlb$ for its ring of integers and $\flb$ for its residue field. 

Let $\psi:\F\to\overline{\QQ}{}^\times_\ell$ be a non-trivial smooth character.
It defines a non-degenerate character
\begin{equation*}
x \mapsto \psi(x_{1,2}+\dots+x_{n-1,n}) 
\end{equation*}
of $\N$,
the subgroup of upper triangular unipotent matrices of $\G$,
still denoted $\psi$.
Note that~this character takes values in $\zlb$,
and even more precisely in
the group of roots of unity in $\qlb$~whose order is a power of $p$.

Let $\P$ denote the mirabolic subgroup of $G$,
made of all matrices whose last row is $(0 \dots 0\ 1)$.

The representations we will consider will be smooth representations of locally
profinite groups with coefficients in $\ZZ[1/p]$-algebras.

\subsection{}
\label{defint}

A $\qlb$-representation of finite length $\pi$ of $G$ is said to be integral if its 
vector space $V$~con\-tains a $G$-stable $\zlb$-lattice.
(A $G$-stable $\zlb$-lattice is a $G$-stable free $\zlb$-module generated
by~a~ba\-sis of $V$ or, equivalently,
an admissible $\zlb[G]$-module containing a basis of $V$.)

If this is the case, 
and if $L$ is such a $G$-stable $\zlb$-lattice,
the representation~of $G$ on $L\otimes\flb$~is smooth and has finite 
length, and its semisimplification 
does not depend on the choice of $L$~(see \cite{Vigw} Theorem 1).
This semisimplified $\flb$-representation
is called the reduction mod~$\ell$~of $\pi$ and~is denoted $\r_\ell(\pi)$.

An irreducible $\qlb$-representation $\pi$ which embeds in the
parabolic induction of some cuspidal irreducible representation $\rho$
of some Levi subgroup $M$ of $G$
is integral if and only if the central character of $\rho$ is $\zlb$-valued
(see \cite{Vigb} II.4.12, II.4.14
and \cite{Datnu} Proposition 6.7).

\subsection{}
\label{defgen}

In this paragraph, 
$\pi$ is a {generic} irreducible $\qlb$-representation of $\G$,
that is,
its vector~space $V$ carries a
non-zero $\qlb$-linear form ${\it\La}$ such that
${\it\La}(\pi(u)v)=\psi(u){\it\La}(v)$ for all $u\in N$, $v\in V$.
Let
\begin{equation}
\label{contient}
\Ww(\pi,\psi) \subseteq \Ind^\G_\N(\psi)
\end{equation}
denote its Whittaker~model with respect to $\psi$,
where $\Ind^\G_\N$ denotes smooth induction from~$\N$~to $\G$. 
Let $\Kk(\pi,\psi)$ denote the~Ki\-ril\-lov model of $\pi$, that is,
the space of smooth $\qlb$-valued functions on $\P$
which extend to a function in $\Ww(\pi,\psi)$. 
By Kirillov's theory, 
restriction from $\G$ to $\P$~indu\-ces~a~$\P$-equivariant isomorphism 
from $\Ww(\pi,\psi)$ to $\Kk(\pi,\psi)$,
and one has the containments
\begin{equation*}
\ind^{\P}_{\N}(\psi) \subseteq \Kk(\pi,\psi) 
\subseteq \Ind^{\P}_{\N}(\psi)
\end{equation*}
where $\ind^\P_\N$ denotes compact induction from $\N$ to $\P$
(see \cite{BZ}).

\subsection{}

In this paragraph, 
$\pi_1$ and $\pi_2$ are integral generic irreducible 
$\qlb$-representations of $\G$. 
Let $\m_{\ell}$ denote the maximal ideal of $\zlb$.

\begin{prop}
\label{proplocal}
Suppose there are Whittaker functions $\W_1\in\Ww(\pi_1,\psi)$
and $\W_2\in\Ww(\pi_2,\psi)$ with values in $\zlb$ such that:
\begin{enumerate}
\item
$\W_1(1)=\W_2(1)=1$,
\item
$\W_1(g)$ and $\W_2(g)$ are congruent mod $\m_{\ell}$ for all $g\in\G$. 
\end{enumerate}
Then the {reductions mod $\ell$} of $\pi_1$ and $\pi_2$ share a 
generic irreducible component,
such~a generic~ir\-reducible component is unique
and it occurs with multiplicity $1$.
\end{prop}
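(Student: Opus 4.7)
The plan is to reduce the Whittaker functions modulo $\ell$ and extract the common generic component from the resulting common subquotient of the two reductions.

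First I would construct, for each $i\in\{1,2\}$, a $\G$-stable $\zlb$-lattice $\Ll_i$ in $\Ww(\pi_i,\psi)$ containing $\W_i$ and made of $\zlb$-valued Whittaker functions. The natural choice is the $\zlb[\G]$-submodule of $\Ww(\pi_i,\psi)$ spanned by the right translates of $\W_i$: it is $\G$-stable; it consists of $\zlb$-valued Whittaker functions, because $\psi$ and $\W_i$ are $\zlb$-valued and right translation preserves $\zlb$-valuedness; it spans $\Ww(\pi_i,\psi)$ over $\qlb$ by irreducibility of $\pi_i$ and the fact that $\W_i(1)=1\neq 0$; and its admissibility follows from integrality of $\pi_i$, which supplies a $\G$-stable $\zlb$-lattice in $\pi_i$ which, after rescaling, contains $\W_i$ and hence $\Ll_i$.

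Then I would reduce modulo $\m_\ell$. Pointwise reduction of values yields a $\G$-equivariant map $\Ll_i/\m_\ell\Ll_i \to \Ind^\G_\N(\overline\psi)$, where $\overline\psi$ denotes the reduction of $\psi$ modulo $\m_\ell$ (still non-trivial since $\ell\neq p$), whose image is the cyclic $\flb[\G]$-submodule $\tau_i:=\flb[\G]\cdot\overline{\W_i}$. Hypothesis~(2) identifies $\overline{\W_1}$ with $\overline{\W_2}$ as $\flb$-valued functions on $\G$, so $\tau_1=\tau_2=:\tau$; hypothesis~(1) forces $\overline{\W_i}(1)=1$, so $\tau\neq 0$. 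By Frobenius reciprocity, every irreducible subrepresentation of $\Ind^\G_\N(\overline\psi)$ carries a non-zero $\overline\psi$-Whittaker functional, hence is generic; in particular, any irreducible $\flb[\G]$-submodule $\sigma$ of $\tau$ is generic. Since $\tau$ is a quotient of $\Ll_i/\m_\ell\Ll_i$ for each $i$, such a $\sigma$ is a Jordan--H\"older factor of $\r_\ell(\pi_i)$ for both $i$, giving a common generic irreducible component.

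The main obstacle is the uniqueness and multiplicity-one part of the statement. Here I would invoke the modular analogue of the classical uniqueness of the Whittaker model: for any integral generic irreducible $\qlb$-representation $\pi$ of $\G$, the reduction $\r_\ell(\pi)$ contains a unique generic irreducible subquotient, and it occurs with multiplicity one. This is a known result in the modular representation theory of $\GL_n(\F)$ (originating in Vign\'eras' work, see also~\cite{MaMo}), and is the natural multiplicity-one counterpart to the integrality criterion of Proposition~\ref{integralschwartz}. Granting it, the $\sigma$ constructed above is forced to be the unique generic component of both $\r_\ell(\pi_1)$ and $\r_\ell(\pi_2)$, appearing with multiplicity one in each, completing the proof.
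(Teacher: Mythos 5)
Your proof is correct and follows essentially the same route as the paper: form the cyclic $\zlb[\G]$-lattice generated by each $\W_i$, reduce pointwise into $\Ind^\G_\N(\psi\otimes\flb)$, observe that the congruence makes the two images meet non-trivially (indeed coincide), take a generic irreducible in the socle, and conclude with the uniqueness/multiplicity-one statement for the generic constituent of $\r_\ell(\pi_i)$. The ``known result'' you invoke at the end is exactly the paper's Lemma~\ref{raisin}, which it justifies via the non-vanishing of the Whittaker functional on lattices together with \cite{MSc} Proposition~8.4, so no genuinely different ingredient is involved.
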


\begin{proof}
We will need the following result. 

\begin{lemm}
\label{raisin}
Let $\pi$ be an integral generic irreducible $\qlb$-representation of $\G$. 
Then its reduc\-tion mod $\ell$ contains a unique irreducible generic factor,
occuring with multiplicity $1$.
\end{lemm}

\begin{proof}
The existence of an irreducible generic factor 
follows from the fact that any~non-zero li\-near form in
$\Hom_{N}(\pi,\psi)$ is non-zero on any $G$-stable $\zlb$-lattice
of $\pi$.
Its uniqueness follows~for instance from \cite{MSc}
Proposition~8.4~ap\-plied to the representation parabo\-lically induced 
from the cuspidal support of $\pi$.
\end{proof}

Let $i\in\{1,2\}$. 
By \cite{Vigw} Theorem 2,
the $\zlb$-module $L_i$ made of all $\zlb$-valued Whittaker functions in 
$\Ww(\pi_{i},\psi)$ is a $G$-stable $\zlb$-lattice. 
Let ${\it\Lambda}_i$ be the $\zlb[\G]$-module~gene\-ra\-ted by
$\W_i$ in $\Ww(\pi_{i},\psi)$. 
It contains a $\qlb$-basis of $\Ww(\pi_{i},\psi)$ since $\pi_i$ is 
irreducible, 
and it~is~con\-tained in $\L_i$. 
It follows that it is a $G$-stable $\zlb$-lattice in $\Ww(\pi_{i},\psi)$.
Let $\M_i$ denote the submodule of $\L_i$ made of all $\m_\ell$-valued
functions.
The containment of $\m_{\ell}{\it\Lambda}_i$ in ${\it\Lambda}_i\cap\M_i$
implies that we have morphisms:
\begin{equation*}
{\it\Lambda}_i\otimes\flb \to {\it\Lambda}_i/({\it\Lambda}_i\cap\M_i)
\simeq ({\it\Lambda}_i+\M_i)/\M_i 
\subseteq \L_i/\M_i \to \Ind^\G_\N(\vartheta\otimes\flb)
\end{equation*}
where the left hand side morphism $\a_i$ is surjective, 
the right hand side morphism $\b_i$ is injective,
and $\vartheta\otimes\flb$ denotes the $\flb$-character of $\N$ obtained 
by reducing $\vartheta$ mod $\m_{\ell}$.

As $\W_{1}$ and $\W_{2}$ are congruent mod $\m_{\ell}$ on $\G$
and take $1$ to $1$, the intersection:
\begin{equation}
\label{interL}
\b_1(({\it\Lambda}_1+\M_1)/\M_1)
\cap
\b_2(({\it\Lambda}_2+\M_2)/\M_2)
\end{equation}
is non-zero in $\Ind^{\G}_{\N}(\vartheta\otimes\flb)$
for it contains the function 
$\b_1(\W_{1}\text{ mod } \M_1) = \b_2(\W_{2}\text{ mod } \M_2)$~and 
the latter is non-zero. 
The socle of \eqref{interL}, denoted ${\it\Sigma}$, 
is made of generic irreducible $\flb$-represen\-tations 
appearing in both the~re\-duc\-tions mod $\ell$ of $\pi_{1}$ 
and $\pi_{2}$.
By Lemma \ref{raisin}, 
the reduction mod $\ell$ of $\pi_{i}$~con\-tains a unique irreducible
gene\-ric factor $\rho_i$. 
The socle ${\it\Sigma}$~is thus~ir\-reducible,~re\-du\-ced~to 
$\rho_i$.
It follows that $\rho_1$ and $\rho_2$ are isomorphic. 
\end{proof}

\subsection{}
\label{par231}

Let $\K$ be the maximal compact subgroup $\GL_n(\Oo_F)$.
In this paragraph,
$\pi$ is an unramified irreducible $\qlb$-representation of $\G$,
that is,
$\pi$ has a non-zero $\K$-fixed vector. 
It defines a~conjuga\-cy class~of~semi\-simple elements in $\GL_n(\qlb)$,
called its Satake parameter.
The characteristic~po\-ly\-nomial of this conjugacy class is denoted $\chi(\pi)$.
This is a polynomial of degree $n$ in $\qlb[X]$.

\begin{lemm}
\label{deWardes}
The unramified representation $\pi$ is integral if and only if
the polynomial $\chi(\pi)$ has all its coefficients in $\zlb$.
\end{lemm}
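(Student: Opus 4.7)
The plan is to invoke the cuspidal support integrality criterion from \S\ref{defint}. Since $\pi$ is an unramified irreducible $\qlb$-representation of $\G=\GL_n(\F)$, realize it as the unique unramified irreducible subquotient of a parabolic induction
\begin{equation*}
\Ind_{B}^{\G}(\chi_1\otimes\cdots\otimes\chi_n)
\end{equation*}
from the Borel subgroup $B$ of upper triangular matrices, where $\chi_1,\dots,\chi_n$ are unramified $\qlb$-characters of $\F^\times$, uniquely determined up to the permutation action of the Weyl group $S_n$. After relabeling via the Weyl group one may assume that $\pi$ embeds in this induction, so its cuspidal support is $\rho=\chi_1\otimes\cdots\otimes\chi_n$, viewed as a character of the diagonal torus $T$. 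The Satake parameter is then the conjugacy class of $\mathrm{diag}(\chi_1(\w),\dots,\chi_n(\w))$ for a uniformizer $\w$ of $\F$, whence
\begin{equation*}
\chi(\pi)(X)=\prod_{i=1}^{n}(X-\chi_i(\w)).
\end{equation*}

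By the criterion of \S\ref{defint}, $\pi$ is integral iff the central character of $\rho$ is $\zlb$-valued. Since the diagonal torus equals its own centre, this central character is simply $\rho$ itself, so the condition becomes that each unramified $\qlb$-character $\chi_i$ of $\F^\times$ is $\zlb$-valued, equivalently $\chi_i(\w)\in\zlb^\times$ for all $i$. The forward direction of the lemma is then immediate: integrality of $\pi$ gives $\chi_i(\w)\in\zlb^\times\subseteq\zlb$, hence $\chi(\pi)\in\zlb[X]$. For the reverse direction, the hypothesis $\chi(\pi)\in\zlb[X]$ yields each root $\chi_i(\w)\in\zlb$ by the integral closedness of $\zlb$ in $\qlb$; the remaining task is to upgrade this to $\chi_i(\w)\in\zlb^\times$.

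The main obstacle is precisely this last refinement. It amounts to showing that the constant term of $\chi(\pi)$, namely $(-1)^n\prod_i\chi_i(\w)=(-1)^n\det(\s_\pi)$ where $\s_\pi$ denotes the Satake parameter, is a unit in $\zlb$: once this is known, no individual $\chi_i(\w)$ can lie in the maximal ideal $\m_\ell$, lest the product do so too. I would handle this via the integral Satake isomorphism $\Hh(\G,\K;\zlb)\simeq\zlb[e_1,\dots,e_{n-1},e_n^{\pm1}]^{S_n}$, noting that $e_n$ is invertible in the integral spherical Hecke algebra, with inverse coming from the central Hecke operator associated to $\w^{-1}\mathrm{I}_n$. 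This forces $\det(\s_\pi)\in\zlb^\times$ as soon as all spherical Hecke eigenvalues of $\pi$ lie in $\zlb$, and in turn produces the desired $\G$-stable $\zlb$-lattice as the $\zlb[\G]$-module generated by a $\K$-fixed vector.
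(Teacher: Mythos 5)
Your first two paragraphs reproduce, in expanded form, exactly the route taken in the paper: since $\chi(\pi)$ is monic and $\zlb$ is integrally closed, its coefficients lie in $\zlb$ if and only if its roots $\mu_1,\dots,\mu_n$ do, and one then appeals to the cuspidal-support criterion of \S\ref{defint} applied to the unramified character $\chi_1\otimes\dots\otimes\chi_n$ of the diagonal torus. You have moreover put your finger on the one genuine subtlety, which the paper's one-line proof passes over in silence: the criterion of \S\ref{defint} requires that character of the torus to be $\zlb$-\emph{valued}, hence each $\mu_i=\chi_i(\w)$ to lie in $\zlb^\times$ (because $\chi_i(\w^{-1})=\mu_i^{-1}$ must also be integral), whereas the hypothesis only places the $\mu_i$ in $\zlb$.

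Your proposed repair of this last step, however, is circular. The invertibility of $e_n$ in $\Hh(\G,\K;\zlb)\simeq\zlb[e_1,\dots,e_{n-1},e_n^{\pm1}]$ shows that a character of the \emph{whole} integral spherical Hecke algebra with values in $\zlb$ must send $e_n$ to a unit; but the hypothesis of the lemma only tells you that the character attached to $\pi$ sends the generators $e_1,\dots,e_n$ into $\zlb$, i.e.\ that it is $\zlb$-valued on the polynomial subalgebra $\zlb[e_1,\dots,e_n]$. Whether it remains $\zlb$-valued after inverting $e_n$ --- equivalently, whether the eigenvalue of the central Hecke operator attached to $\w^{-1}\mathrm{I}_n$ is integral --- is precisely the assertion $\prod_i\mu_i\in\zlb^\times$ that you set out to prove; so ``all spherical Hecke eigenvalues of $\pi$ lie in $\zlb$'' is the desired conclusion, not a consequence of the hypothesis. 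In fact no argument can close this gap from the stated hypothesis alone: already for $n=1$, the unramified character $\chi$ with $\chi(\w)=\ell$ has $\chi(\pi)=X-\ell\in\zlb[X]$ and yet is not integral, since $\chi(\w^{-1})=\ell^{-1}$ rules out any stable lattice. The ``if'' direction therefore requires the additional input that the constant coefficient $c_n(\pi)$ be a unit --- equivalently, that the central character of $\pi$ be integral, which is the kind of hypothesis made separately in Assumption 1 of Conjecture \ref{MAINCONJ} and in Proposition \ref{PropAnimus}. Once that is granted, your observation that no $\mu_i$ can fall into the maximal ideal lest the product do so completes the proof.
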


\begin{proof}
Since $\zlb$ is integrally closed, 
$\chi(\pi)$ has all its coefficients in $\zlb$
if and only if its roots are in $\zlb$,
that is,
if and only if $\pi$ is parabolically induced from an integral
unramified character of the diagonal torus of $\GL_n(F)$.
The lemma then follows from Paragraph \ref{defint}.
\end{proof}

\subsection{}
\label{Wpi}

Now assume that $\pi$ is a generic unramified irreducible 
$\qlb$-representation of $\G$,
and that~$\psi$ is trivial on $\Oo_F^{\phantom{1}}$
but not on $\p_F^{-1}$.
Its Whitta\-ker model $\Ww(\pi,\psi)$ contains a unique
Whittaker~func\-tion~$\W_{\pi}$ such that:
\begin{enumerate}
\item
one has $\W_{\pi}(gk)=\W_{\pi}(g)$ for all $g\in G$ and $k\in K$,
\item
and $\W_{\pi}(1)=1$.
\end{enumerate}
Let us recall the Shintani--Casselman--Shalika formula
\cite{Shintani,Cogdell},
which gives the values of $\W_{\pi}$ at diagonal elements in terms of the
Satake parameter of $\pi$.

Fix a representative
$(\mu_{1},\dots,\mu_{n})\in\overline{\QQ}{}_\ell^{\times n}$ 
of the Satake parameter of $\pi$.
If we write
\begin{equation*}
\chi(\pi) = X^n + c_{1}(\pi) X^{n-1} + \dots + c_{n}(\pi) \in \qlb[X],
\end{equation*}
then
\begin{equation*}
(-1)^{r}c_r(\pi) = \sum\limits_{1\<i_1<\dots<i_r\<n} \mu_{i_1}\dots\mu_{i_r} 
\end{equation*}
for all $r\in\{1,\dots,n\}$.
Let $q$ be the cardinality of the residue field of $F$.
Fix a uniformizer $\w\in\F$
and let ${\it\Delta}$ be the subgroup of $\G$ made of all diagonal~ma\-tri\-ces 
whose eigenvalues are integral~po\-wers of $\w$.
The Iwasawa decomposition $\G=\N{\it\Delta}\K$ shows that $\W_{\pi}$ is
entirely determined by its restriction to ${\it\Delta}$.
Given $a\in\ZZ^{n}$, 
write $\w^a$ for the diagonal matrix whose $i$th eigenvalue is 
$\w^{a_i}$.
 
One has the formula:
\begin{equation}
\label{formuledeShintani}
\W_{\pi}(\w^a) = 
 q^{\sum\limits_{j=1}^n a_j (j-(n+1)/2) } 
\cdot
\frac{\det((\mu_{j}^{a_l+n-l})_{j,l})}{\prod\limits_{j<l}(\mu_j-\mu_l)} 
\quad
\text{if $a_1\>\dots\>a_n$}
\end{equation}
and $\W_{\pi}$ vanishes at $\w^a$ otherwise. 

\subsection{}

Formula \eqref{formuledeShintani} has the following application.

\begin{prop}
\label{pagniez}
Let $\pi_1$ and $\pi_2$ be integral generic unramified irreducible 
representations~of $\G$.
Assume that the polynomials $\chi(\pi_1)$ and $\chi(\pi_2)$ have
the same reduction mod $\m_\ell$ in $\flb[X]$.~Then
the Whittaker functions $\W_{\pi_1}$ and $\W_{\pi_2}$ are
$\zlb$-valued on $\G$ and one has: 
\begin{equation}
\label{congWhit}
\W_{\pi_1}(g)\equiv\W_{\pi_2}(g) \ {\rm mod}\ \m_\ell
\end{equation}
for all $g\in\G$.
\end{prop}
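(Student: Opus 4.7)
The plan is to apply the Iwasawa decomposition $\G = \N\cdot{\it\Delta}\cdot\K$ together with the left-$(\N,\psi)$-equivariance and right-$\K$-invariance of the Whittaker function to reduce both assertions to statements about the values $\W_{\pi_i}(\w^a)$ for $a \in \ZZ^n$. Since $\psi$ is valued in $p$-power roots of unity, hence in $\zlb^\times$, and every diagonal matrix in $\G$ equals $\w^a \cdot k$ for some $a \in \ZZ^n$ and some $k \in \K \cap {\it\Delta}$, it is enough to prove that $\W_{\pi_i}(\w^a) \in \zlb$ and that $\W_{\pi_1}(\w^a) \equiv \W_{\pi_2}(\w^a) \pmod{\m_\ell}$ for $a_1 \> \dots \> a_n$; outside this dominant cone, \eqref{formuledeShintani} asserts that $\W_{\pi}(\w^a)$ vanishes.

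On the dominant cone I will substitute the Shintani--Casselman--Shalika formula \eqref{formuledeShintani} and factor $\mu_j^{a_n}$ out of the $j$-th row of the numerator, so as to rewrite
\begin{equation*}
\W_\pi(\w^a) \;=\; q^{\sum_j a_j(j-(n+1)/2)} \cdot (\mu_1\cdots\mu_n)^{a_n} \cdot s_\lambda(\mu_1,\dots,\mu_n),
\end{equation*}
where $\lambda = (a_1 - a_n, \dots, a_{n-1} - a_n, 0)$ is a partition and $s_\lambda$ denotes the corresponding Schur polynomial. The key point is then that, by the fundamental theorem of symmetric functions, $s_\lambda$ lies in $\ZZ[e_1,\dots,e_n]$, where $e_r(\mu) = (-1)^r c_r(\pi)$; consequently $s_\lambda(\mu_1,\dots,\mu_n)$ belongs to $\zlb$ by Lemma \ref{deWardes}, and the two values $s_\lambda(\mu^{(1)})$ and $s_\lambda(\mu^{(2)})$ at the Satake parameters of $\pi_1$ and $\pi_2$ are congruent modulo $\m_\ell$ thanks to the hypothesis that $\chi(\pi_1)$ and $\chi(\pi_2)$ reduce to the same polynomial in $\flb[X]$. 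The $q$-prefactor is a unit in $\zlb$ because $\ell \neq p$ (a square root of $q$, if needed, being fixed in $\zlb^\times$) and is of course identical for $\pi_1$ and $\pi_2$.

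The remaining factor is $(\mu_1\cdots\mu_n)^{a_n} = \bigl((-1)^n c_n(\pi)\bigr)^{a_n}$, and this is where I expect the main subtlety, since for $a_n < 0$ the claimed integrality becomes vacuous unless $c_n(\pi) \in \zlb^\times$. The way through is to observe that $\mu_1\cdots\mu_n$ is the value at $\w\cdot I_n$ of the central character of $\pi$, and that integrality of $\pi$ in the sense of Paragraph \ref{defint} forces this central character to be $\zlb^\times$-valued (and not merely $\zlb$-valued); hence $c_n(\pi) \in \zlb^\times$ and $(\mu_1\cdots\mu_n)^{a_n} \in \zlb^\times$ for every $a_n \in \ZZ$, with the values for $\pi_1$ and $\pi_2$ congruent modulo $\m_\ell$ because $c_n(\pi_1) \equiv c_n(\pi_2) \pmod{\m_\ell}$. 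Assembling the three factors produces both the $\zlb$-integrality of $\W_{\pi_i}$ and the desired congruence on ${\it\Delta}$, hence on all of $\G$ by the initial reduction.
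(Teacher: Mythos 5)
Your proof is correct and follows the same skeleton as the paper's: reduce to the diagonal subgroup ${\it\Delta}$ via the Iwasawa decomposition and read everything off the Shintani--Casselman--Shalika formula \eqref{formuledeShintani}. The execution differs in two worthwhile ways. First, where the paper passes to the roots (the $\mu_{i,j}$ lie in $\zlb$ and, after reordering, satisfy $\mu_{1,j}\equiv\mu_{2,j}$ mod $\m_\ell$ --- a matching that itself needs the small fact that congruent polynomials over $\zlb$ have congruent multisets of roots), you stay with the coefficients: writing the bialternant as $(\mu_1\cdots\mu_n)^{a_n}s_\lambda(\mu)$ and expanding $s_\lambda$ as an integer polynomial in the $e_r=(-1)^rc_r(\pi)$ makes the congruence an immediate consequence of the hypothesis on $\chi(\pi_1)$, $\chi(\pi_2)$, with no root-matching needed. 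Second, you isolate and resolve the point the paper's one-line conclusion leaves implicit, namely that for $a_n<0$ one needs $\mu_1\cdots\mu_n=(-1)^nc_n(\pi)$ to be a \emph{unit}; your observation that this is the central character evaluated at $\w I_n$, which must be $\zlb^\times$-valued for an integral irreducible representation (any stable lattice is preserved by $\omega(z)$ and $\omega(z)^{-1}$), is exactly the right justification, and the congruence of the inverses then follows from $c_n(\pi_1)\equiv c_n(\pi_2)$ in $\zlb^\times$. Both write-ups also rely tacitly on $q^{1/2}\in\zlb^\times$ (true since $\ell\neq p$) and on interpreting the quotient by the Vandermonde as a Schur polynomial when the $\mu_j$ are not distinct, which your reformulation handles cleanly.
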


\begin{proof}
Notice that $\chi(\pi_1)$ and $\chi(\pi_2)$
have coefficients in $\zlb$ by Lemma \ref{deWardes},
thus the reduction of $\chi(\pi_1)$ and $\chi(\pi_2)$ mod $\m_\ell$ 
is well-defined. 

It suffices to prove that $\W_{\pi_1}$ and $\W_{\pi_2}$ are $\zlb$-valued on
${\it\Delta}$
and that the relation \eqref{congWhit} is satisfied for all $g\in{\it\Delta}$.
Fix a representative
$\boldsymbol{\mu}_i=(\mu_{i,1},\dots,\mu_{i,n})
\in\overline{\QQ}{}_\ell^{\times n}$ of the Satake parameter of $\pi_i$
for $i=1,2$.
The scalars $\mu_{i,1},\dots,\mu_{i,n}$ are the roots of $\chi(\pi_i)$.
They are thus in $\zlb$.
The fact~that~the polynomials
$\chi(\pi_1)$ and $\chi(\pi_2)$ and congruent mod $\m_\ell$ 
ensures that, up to reordering, we may~as\-su\-me that 
$\mu_{1,j}$ and $\mu_{2,j}$ are congruent mod $\m_\ell$ for all $j=1,\dots,n$.
The lemma now follows from the Shintani formula \eqref{formuledeShintani}. 
\end{proof}

\subsection{}
\label{platini}

In this paragraph, 
$\pi$ is a {generic} irreducible $\qlb$-representation of $\G$.
We have the following remarkable~integrality~crite\-rion, 
based on \cite{HelmMossINVENT18} and \cite{MaMo}. 

\begin{prop}
\label{lem2}
\label{lemHM}
Let $\pi$ be a generic irreducible $\qlb$-representation of $\G$. 
The following~asser\-tions are equivalent:
\begin{enumerate}
\item 
The representation $\pi$ is integral.
\item
Given any $\zlb$-valued function $f\in\ind_\N^\P(\psi)$,
the Whittaker function in $\Ww(\pi,\psi)$
extending~$f$ is $\zlb$-valued.
\end{enumerate}
\end{prop}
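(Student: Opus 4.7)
The plan is to prove the two implications separately, each using one of the two results cited in the statement.

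For (1) $\Rightarrow$ (2), assume $\pi$ is integral. By Vigneras's theorem \cite{Vigw}, the $\zlb$-submodule $L \subset \Ww(\pi,\psi)$ of $\zlb$-valued Whittaker functions is a $G$-stable $\zlb$-lattice. Under the Kirillov isomorphism, $L$ embeds as an admissible $P$-stable $\zlb$-sublattice $L|_\P$ of $\Kk(\pi,\psi)$. The input from \cite{HelmMossINVENT18} is that, for integral generic $\pi$, this integral Kirillov lattice contains every $\zlb$-valued function in $\ind_\N^\P(\psi)$. Granting this, for any $\zlb$-valued $f \in \ind_\N^\P(\psi)$, the unique Whittaker extension $W \in \Ww(\pi,\psi)$ of $f$ lies in $L$, and is therefore $\zlb$-valued on all of $\G$, proving (2).

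For (2) $\Rightarrow$ (1), let $L$ again denote the $\zlb$-submodule of $\zlb$-valued Whittaker functions in $\Ww(\pi,\psi)$, which is $G$-stable by construction. Pick any nonzero $\zlb$-valued $f_0 \in \ind_\N^\P(\psi)$, and let $W_0$ be its Whittaker extension; by (2), $W_0 \in L$, so $L \neq 0$. Since $\pi$ is irreducible and $W_0 \neq 0$, the $\qlb$-span of $\zlb[\G]\cdot W_0 \subseteq L$ is all of $\Ww(\pi,\psi)$, whence $L \otimes_{\zlb}\qlb = \Ww(\pi,\psi)$. It remains to show that $L$ is admissible as a $\zlb[\G]$-module; once this is in hand, $L$ is a $G$-stable $\zlb$-lattice in $\pi$ and $\pi$ is integral. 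This admissibility is the content of \cite{MaMo}: one controls $L^K$, for each open compact $K \subset \G$, by transporting via Kirillov restriction the $P$-admissible integral structure on $\ind_\N^\P(\psi)$ available in the Kirillov model, with hypothesis (2) ensuring that this transport respects integrality.

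The main obstacle is the admissibility step in the (2) $\Rightarrow$ (1) direction: a priori a $G$-stable $\zlb$-submodule of $\pi$ spanning $\pi$ over $\qlb$ need not be admissible, since its $K$-invariants could be $\zlb$-submodules of $\pi^K$ with unbounded denominators, and ruling this out requires the fine analysis of the integral Kirillov model carried out in \cite{MaMo}. The (1) $\Rightarrow$ (2) direction is essentially a packaging of Helm--Moss's integral Kirillov theory, whereas (2) $\Rightarrow$ (1) is the harder converse, where the sparse a priori information extracted from (2) must be bootstrapped into a full $G$-admissible integral structure.
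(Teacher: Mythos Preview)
You have reversed the roles of the two references, and in doing so the harder direction acquires a genuine gap.

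For $(1)\Rightarrow(2)$, the paper invokes \cite{MaMo} Corollary~4.3 (not \cite{HelmMossINVENT18}): for integral $\pi$, a Whittaker function is $\zlb$-valued on $\G$ if and only if it is $\zlb$-valued on $\P$. Your argument via Vign\'eras's lattice $L$ is fine, but the input you need---that $L|_\P$ contains every $\zlb$-valued element of $\ind_\N^\P(\psi)$---is precisely \cite{MaMo}, not Helm--Moss.

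For $(2)\Rightarrow(1)$, your plan is to show directly that the module $L$ of $\zlb$-valued Whittaker functions is admissible, citing \cite{MaMo}. But \cite{MaMo} works under the hypothesis that $\pi$ is already integral, so it cannot be invoked here, and no other argument for the admissibility of $L$ is offered. The paper takes a completely different route: it descends $\pi$ to a finite extension of $\QQ_\ell^{\rm ur}$ and passes to its completion $K$ (so that the ring of integers $\Oo$ is Noetherian, a hypothesis in \cite{HelmMossINVENT18}); it checks that assertion~(2) transfers to $\Oo$; then \cite{HelmMossINVENT18} Theorem~3.2, combined with the co-Whittaker formalism of \cite{HelmDMJ16}, forces the Bernstein-centre character of $\pi_K$ to land in $\Oo$, and the image of the universal co-Whittaker module $\Ww_{\it\Omega}\otimes_{\mathfrak{z}_{\it\Omega}}\Oo$ furnishes an admissible $\Oo$-lattice. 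A Jacquet-module argument then shows the central character of the cuspidal support of $\pi$ is $\zlb$-valued, whence $\pi$ is integral. None of these ingredients---the Noetherian descent, the Bernstein centre, the universal co-Whittaker module---appear in your sketch, and the admissibility of $L$ itself is never established directly.
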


\begin{rema}
Assertion 2 can~be restated as follows:
given any function $W\in\Ww(\pi,\psi)$ whose restriction to $P$ is 
compactly suppor\-ted mod $\N$,
the function $W$ is $\zlb$-valued on $G$ if and only~if it is $\zlb$-valued
on $P$. 
\end{rema}

\begin{proof}
That the first assertion implies the second one
follows from \cite{MaMo} Corollary 4.3
(which gives an even stronger result:
it says that a Whittaker function $W\in\Ww(\pi,\psi)$ is $\zlb$-valued
on~$G$~if and only if it is $\zlb$-valued on $P$).

Let us prove that the second assertion implies the first one.
We will use \cite{HelmMossINVENT18} Theorem 3.2,~which is stated for 
Noetherian al\-ge\-bras over the ring $\WW_\ell$ of Witt vectors of $\flb$. 
Let us explain how it applies to a generic $\qlb$-representation $\pi$
sa\-tisfying Assertion 2.

Let $V$ be the $\qlb$-vector space of $\pi$.
By \cite{Vigb} II.4.9, 
there exists a finite extension $E$ of $\QQ_\ell^{\rm ur}$,~the
maximal~unra\-mi\-fied extension of $\QQ_\ell$ in $\qlb$,
such that $\pi$ is defined over $E$,
that is,
$V$ contains~a~$G$-stable $E$-vector space $V_E$ such that
$V=V_E\otimes_E \qlb$.
Let $\pi_E$ denote the $E$-representation~of~$G$~on $V_E$.
If $\psi_E$ denotes the character $\psi$ considered as being valued in $E$,
then $\pi_E$ is generic~with~res\-pect to $\psi_E$.
Let $K$ be the completion of $E$. 
Then $\pi_K=\pi_E\otimes_E K$ is 
generic with~res\-pect to the character $\psi_K = \psi_E\otimes_E K$.
Since the complete discrete valuation ring $\WW_\ell$
is~iso\-mor\-phic to the completion of the ring of integers
of $\QQ_\ell^{\rm ur}$,
the ring $\Oo$ of integers of $K$ is a~Noe\-the\-rian $\WW_\ell$-algebra.
Let us show that $\pi_K$ satisfies the analogue of Assertion 2 for the ring $\Oo$.

\begin{lemm}
\label{marmite}
Given~any
$\Oo$-valued function $f\in\ind_\N^\P(\psi_K)$,
there exists an $\Oo$-valued function in $\Ww(\pi_K,\psi_K)$
extending~$f$.
\end{lemm}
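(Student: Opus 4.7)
The plan is to reduce the $K$-level statement to Assertion 2 for $\pi$ by routing through the $E$-rational Whittaker model of $\pi_E$, since $E$ embeds both into $\qlb$ (by construction) and into $K$ (as its completion), whereas $K$ and $\qlb$ cannot in general be directly compared.

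First I would decompose any $\Oo$-valued function $f \in \ind_\N^\P(\psi_K)$ as a finite sum $f = \sum_i a_i f_i$, where $J \subseteq \P$ is a compact open subgroup small enough that $f$ is right $J$-invariant and $J \cap \N \subseteq \ker\psi$, and each $f_i$ is supported on a single double coset $\N g_i J$ with $f_i(ng_ik) = \psi(n)$ for $n \in \N$, $k \in J$. The $f_i$ depend only on $\psi$ and not on the coefficient field, take values in the $p$-power roots of unity of $\qlb$ (hence in $\Oo_E := E \cap \zlb$), and have pairwise disjoint supports modulo $\N$; the latter forces $a_i = f(g_i) \in \Oo$.

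The heart of the argument is to extend each $f_i$ to an $\Oo_E$-valued Whittaker function $W_{E,i} \in \Ww(\pi_E, \psi_E)$. Kirillov's theorem applied to $\pi_E$ (which is generic and irreducible, since the same is true of $\pi = \pi_E \otimes_E \qlb$) yields a unique $W_{E,i} \in \Ww(\pi_E, \psi_E)$ extending $f_i$, automatically $E$-valued. On the $\qlb$-side, viewing $f_i$ as a $\zlb$-valued element of $\ind_\N^\P(\psi)$, Assertion 2 for $\pi$ gives a $\zlb$-valued extension $W_i \in \Ww(\pi, \psi)$. Under the canonical identification $\Ww(\pi_E, \psi_E) \otimes_E \qlb = \Ww(\pi, \psi)$, both $W_{E,i} \otimes 1$ and $W_i$ extend $f_i$ on $\P$, so uniqueness of the Kirillov extension forces them to coincide as functions on $\G$. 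Hence $W_{E,i}$ is simultaneously $E$- and $\zlb$-valued, and therefore $\Oo_E$-valued.

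Finally, base-changing to $K$, each $W_{E,i} \otimes 1$ lies in $\Ww(\pi_K, \psi_K) = \Ww(\pi_E, \psi_E) \otimes_E K$, still takes values in $\Oo_E \subseteq \Oo$, and restricts on $\P$ to $f_i$; the combination $W_K := \sum_i a_i (W_{E,i} \otimes 1)$ is then the desired $\Oo$-valued element of $\Ww(\pi_K, \psi_K)$ extending $f$. The main obstacle is precisely the incomparability of $K$ and $\qlb$ noted above: Assertion 2 cannot be applied to $\pi_K$ directly, and the trick is that the Kirillov extension commutes with scalar extension from $E$, so that integrality information extracted from Assertion 2 on the $\qlb$-side can be read off at the $E$-level and freely propagated to the $K$-level.
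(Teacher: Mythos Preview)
Your proof is correct and follows the same strategy as the paper's: decompose $f$ as an $\Oo$-linear combination of $\zlb$-valued building blocks $f_i$, apply Assertion~2 to each $f_i$, and recombine. Your version is more careful than the paper's three-line argument in that you route explicitly through $\Ww(\pi_E,\psi_E)$ and invoke uniqueness of the Kirillov extension to conclude that the $W_i$ are $\Oo_E$-valued, thereby justifying why a $\zlb$-valued function in $\Ww(\pi,\psi)$ may be regarded as an $\Oo$-valued function in $\Ww(\pi_K,\psi_K)$---a passage the paper leaves implicit.
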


\begin{proof}
This $f$ can be written $a_1f_1 + \dots + a_rf_r$
with $a_1,\dots,a_r \in \Oo$, and where the functions
$f_1,\dots,f_r \in \ind^G_N(\psi)$ are $\zlb$-valued.
By assumption on $\pi$,
the function $W_i\in\Ww(\pi,\psi)$~ex\-ten\-ding $f_i$ is $\zlb$-valued.
Thus $a_1W_1 + \dots + a_rW_r$ is in $\Ww(\pi_K,\psi_K)$, 
it extends $f$ and it is $\Oo$-valued.
\end{proof}

Let us collect some results from \cite{HelmDMJ16} about the category
$\boldsymbol{{\sf Rep}}_{\WW_\ell}(G)$ of all
smooth $\WW_\ell$-represen\-ta\-tions of $G$. 
This category decomposes into a product of blocks
indexed by~iner\-tial~classes~${\it\Omega}$~of supercuspidal
$\flb$-representations of $G$.
Associated with each block,
there is~its centre $\mathfrak{z}_{\it\Omega}$,
which~is a finitely generated commutative $\WW_\ell$-algebra,
and~its \textit{universal co-Whittaker} module $\Ww_{\it\Omega}$, 
which is an admissible $\mathfrak{z}_{\it\Omega}[G]$-module.

The representation $\pi_K$ is absolutely irreducible and generic.
It is thus a co-Whittaker $K[G]$-mo\-dule
in the sense of \cite{HelmMossINVENT18} Defini\-tion~2.1.
Also, 
by Schur's lemma,
if ${\it\Omega}$ is the inertial class~asso\-ciated with it, 
the action of the centre $\mathfrak{z}_{\it\Omega}$ on~$\pi_K$
defines a morphism of $\WW_\ell$-algebras
$\chi : \mathfrak{z}_{\it\Omega} \to K$.  
By \cite{HelmDMJ16} Theorem 6.3,
the representation $\pi_K$ is a quotient of
$\Ww_{\it\Omega} \otimes_{\mathfrak{z}_{\it\Omega}} K$.

We now apply \cite{HelmMossINVENT18} Theorem 3.2 to $\pi_K$
(with $A=K$~and $A'=\Oo$),
which says that,
thanks~to Lemma \ref{marmite},
$\chi$ is valued in $\Oo$,
which makes~$\Oo$~into a~$\mathfrak{z}_{\it\Omega}$-al\-gebra.
By \cite{HelmDMJ16} Lemma 6.4 (or more~precisely its proof),
the image $L$ of $\Ww_{\it\Omega} \otimes_{\mathfrak{z}_{\it\Omega}} \Oo$
in $\pi_K$ is an $\Oo$-torsion free {co-Whittaker} $\Oo[G]$-module 
such that $L \otimes_{\Oo}K = \pi_K$.
By \cite{HelmMossINVENT18} Definition~2.1,
this $\Oo[G]$-module $L$ is admissible. 
The~repre\-sen\-tation $\pi_K$ is thus $\Oo$-integral.

Fix a parabolic subgroup $Q$ of $G$ with Levi subgroup $M$,
and a cuspidal irreducible representation $\rho$ of $M$ such that $\pi$
embeds in the parabolic induction $\ip^G_Q(\rho)$.
We may and will choose~$E$~so that $\rho$ is also defined 
over $E$:
we thus have an $E$-representation $\rho_E$ such that 
$\rho_E\otimes_E\qlb=\rho$~and
$\pi_E$ embeds in $\ip^G_Q(\rho_E)$.
Thus $\pi_K$ embeds in $\ip^G_Q(\rho_K)$,
where $\rho_K=\rho_E\otimes_EK$~is~cuspidal~and~abso\-lutely~irreducible.
Note that the central character $\omega$ of $\rho_K$ takes values in $E$.

Since $\L$ is admissible,
\cite{Datnu} Proposition 6.7 implies that the Jacquet module $\rp^G_Q(L)$
is admissible, 
thus $\rho_K$ is $\Oo$-integral.
Its central character $\omega$ thus takes values in $\Oo_E$,
thus~the central character of $\rho$ takes values in $\zlb$.
It follows (see Paragraph \ref{defint})
that $\pi$ is integral, as expected.
\end{proof}

\section{Global considerations}
\label{SEC3}

\subsection{}
\label{par41}

Let $k$ be a global field, that is,
either a finite extension of $\QQ$ or the field of rational functions over a 
smooth irreducible projective curve $X$ defined over a finite field of
cardinality $q$.
Let $\AA$~be the ring of ad\`eles of $k$.

Given an integer $n\>2$, 
let $\N=\N_n$ be the subgroup of upper triangular unipotent matrices 
of $\GL_n$
and $P=P_n$ be its mirabolic subgroup,
made of all matrices whose last row is $(0 \dots 0\ 1)$.
More generally,
for $m\in\{0,\dots,n\}$,
let $N_{m,n-m}$ denote the unipotent radical of the
parabolic~sub\-group of $\GL_n$
generated by upper triangular matrices and the Levi
subgroup $\GL_{m} \times \GL_{n-m}$.

Let $\psi:\AA\to\CC^\times$ be a non-trivial continuous
character trivial on $k$. 
It defines in the usual~way a non-degenerate character
of $\N(k)\backslash\N(\AA)$,
namely
\begin{equation*}
u \mapsto \psi(u_{1,2} + \dots + u_{n-1,n})
\end{equation*}
for all $u \in N(\AA)$,
which we still denote by $\psi$.

For any place $v$ of $k$, let $k_v$ denote the completion of $k$ at $v$. 
If $v$ is finite, we write $\Oo_v$ for the ring of integers of $k_v$ and 
$\p_v$ for its maximal ideal.
The character $\psi$ decomposes as 
\begin{equation}
\label{declocpsi}
\psi=\bigotimes_{v}\psi_v
\end{equation}
where $\psi_v$ is a non-trivial continuous character of $k_v$,
trivial on $\Oo_v$ but not on $\p_v^{-1}$ for almost all
finite $v$.

\subsection{}
\label{WhittakerFourrier}

Let us fix a Haar measure $\dd u$ on $\N(k)\backslash\N(\AA)$.
Given a cuspidal irreducible automorphic~re\-pre\-sen\-tation
$\Pi$ of $\GL_n(\AA)$, 
the linear form
\begin{equation}
\h \mapsto \int\limits_{\N(k)\backslash\N(\AA)} \psi(u)^{-1}\h(u)\ \dd u
\end{equation}
on $\Pi$
is known to be well-defined and non-zero
(see \cite{Cogdell} Theorem 1.1 or \eqref{psiwconverse} below).

Associated to $\h\in\Pi$, there is a Whittaker function $\W_\h$ defined by: 
\begin{equation}
\label{psiw}
\W_\h(g) = 
\int\limits_{\N(k)\backslash\N(\AA)} \psi(u)^{-1}\h(ug)\ \dd u
\end{equation}
for all $g\in\GL_n(\AA)$.
The map $\h\mapsto\W_\h$ is a morphism from $\Pi$ to its Whittaker model 
$\Ww(\Pi,\psi)$. 

If we choose for $\dd u$ the Haar measure giving measure $1$ to the compact
group $\N(k)\backslash\N(\AA)$,~one also has a converse expansion:
\begin{equation}
\label{psiwconverse}
\h(g) = \sum\limits_{\g\in\N_{n-1}(k)\backslash\GL_{n-1}(k)} 
\W_\h\left(
\begin{pmatrix}\g&0\\0&1\end{pmatrix}
g\right)
\end{equation}
for all $g\in\GL_n(\AA)$,
with absolute and uniform convergence on compact subsets
(see for instance \cite{GoldfeldHundley} Theorem 13.5.4
or \cite{Cogdell} Theorem 1.1).

\subsection{}
\label{par33}

From now on,
assume that $k$ is a function field of characteristic $p$.
There is thus no~Archi\-me\-dean place. 
Let $\I$ be any sub-$\ZZ[\mu_{p}]$-module of $\CC$,
where $\mu_{p}$ denotes the subgroup of $p$th roots of unity in $\CC$.
Note that $\psi$ takes values in $\mu_p$.

\begin{theo}
\label{GHthm}
Let $\phi:\P_n(\AA)\to\CC$ be a smooth function such that:
\begin{enumerate}
\item 
one has $\phi(\g g)=\phi(g)$ for all $\g\in\P_n(k)$ and all $g\in\P_n(\AA)$, 
\item 
the function $\phi$ is cuspidal in the sense that
\begin{equation*}
\int\limits_{N_{m,n-m}(k) \backslash N_{m,n-m}(\AA)} \phi(ug) \ \dd u = 0
\end{equation*}
for all $g\in\P_n(\AA)$ and all $m\in\{1,\dots,n-1\}$,
\item 
one has
\begin{equation*}
\int\limits_{\N(k)\backslash\N(\AA)} \psi(u)^{-1}\phi(ug)\ \dd u \in \I
\end{equation*}
for all $g\in P_n(\AA)$.
\end{enumerate}
Then $\phi(g)\in\I$ for all $g\in P_n(\AA)$.
\end{theo}

We will prove this theorem by induction on $n\>2$. 
Given a function $\phi$ as in Theorem \ref{GHthm},
it is useful to define a function $W_\phi$ on $\P_n(\AA)$ by setting
\begin{equation*}
W_\phi (g) = \int\limits_{\N(k)\backslash\N(\AA)} \psi(u)^{-1}\phi(ug)\ \dd u 
\end{equation*}
for all $g\in\P_n(\AA)$.
We will often use the fact that, by Assumption 3,
it takes values in $\I$.

\subsection{}

We first treat the case where $n=2$.
We will need the following lemma.

\begin{lemm}
For any smooth functions $f,g \in\Cc^\infty(\AA/k,\CC)$, we have
\begin{equation*}
\int_{\AA/k} f(x) g(x) \ \dd x 
= \sum\limits_{ \g\in k} \ \int_{\AA/k} \psi^{-1} (\g x) f(x) \ \dd x \cdot
\int_{\AA/k} \psi (\g x) g(x) \ \dd x.
\end{equation*} 
\end{lemm}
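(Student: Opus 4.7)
The identity is the Plancherel (Parseval) formula for the compact abelian group $\AA/k$, and the strategy is to deduce it from Fourier inversion. Since $k$ is a function field (as assumed from \S\ref{par33}), the quotient $\AA/k$ is compact; moreover, the self-duality of $\AA$ combined with the triviality of $\psi$ on $k$ identifies the Pontryagin dual of $\AA/k$ with $k$ via $\g\mapsto(x\mapsto\psi(\g x))$. Choosing the Haar measure $\dd x$ on $\AA/k$ of total mass $1$ (as already fixed in \S\ref{WhittakerFourrier} for $\N(k)\backslash\N(\AA)$, and applied here in the $n=2$ setting to the abelian factor), the dual measure on the discrete group $k$ is the counting measure.

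The first step is to write down Fourier inversion on $\AA/k$: for any smooth $h\in\Cc^\infty(\AA/k,\CC)$, set
$$\widehat{h}(\g) = \int_{\AA/k} \psi^{-1}(\g x) h(x) \ \dd x, \quad \g\in k,$$
so that $h(x) = \sum_{\g\in k} \widehat{h}(\g) \psi(\g x)$. Because $\AA/k$ is a compact totally disconnected group, a smooth (i.e.\ locally constant) function factors through a finite quotient, so this Fourier expansion is actually a finite sum and all convergence issues are trivial.

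The second step is to substitute the Fourier expansion of $f$ into the left-hand side. Interchanging the finite sum with the integral (legitimate because the sum is finite) yields
$$\int_{\AA/k} f(x)g(x) \ \dd x \ = \ \sum_{\g\in k} \widehat{f}(\g) \int_{\AA/k} \psi(\g x) g(x) \ \dd x,$$
which is exactly the desired identity once $\widehat{f}(\g)$ is unfolded.

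I do not expect any serious obstacle: the whole content is that $\{x\mapsto\psi(\g x)\}_{\g\in k}$ is an orthonormal basis of $L^2(\AA/k)$ with respect to the normalized Haar measure, and this is standard Tate-style harmonic analysis on adeles. The only mild point is to make sure the normalization of $\dd x$ on $\AA/k$ is the mass-one one (otherwise a global constant would appear), which matches the convention used elsewhere in the paper.
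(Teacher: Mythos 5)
Your proof is correct and is essentially the paper's own argument: both start from the Fourier inversion formula $f(y)=\sum_{\g\in k}\psi(\g y)\widehat f(\g)$ on the compact group $\AA/k$, multiply by $g$, and integrate, with the interchange of sum and integral justified by the finiteness of the expansion for locally constant functions. Your remarks on the mass-one normalization and the identification of the dual of $\AA/k$ with $k$ are correct and consistent with the paper's conventions.
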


\begin{proof}
Start with the Fourier expansion formula
\begin{equation*}
f(y) = \sum\limits_{ \g\in k} \ \psi(\g y) \cdot \int_{\AA/k} \psi^{-1} (\g x) f(x) \ \dd x 
\end{equation*}
for $y\in\AA/k$.
Then multiply by $g(y)$ and integrate over $\AA/k$.
\end{proof}

Let $f\in\Cc^\infty(\AA/k,\CC)$.
For any $g\in\P_2(\AA)$,
we thus get
\begin{equation}
\label{iamcharlottesimmons2}
\int_{\AA/k}
\phi \left( \begin{pmatrix}1&u\\0&1\end{pmatrix} g \right)
f(u) \ \dd u 
= \sum\limits_{ \g\in k} {\it\Phi} (\g, g) F (\g) 
\end{equation}
where
\begin{equation*}
{\it\Phi} (\g, g) = \int_{\AA/k} \psi^{-1} (\g u)
\phi \left( \begin{pmatrix}1&u\\0&1\end{pmatrix} g \right)
\ \dd u
\quad
\text{and}\quad 
F (\g) = \int_{\AA/k} \psi (\g u) f(u) \ \dd u.
\end{equation*}
Therefore, we have ${\it\Phi} (0,g)=0$ by cuspidality of $\phi$ and, if $\g\neq0$, we have
\begin{eqnarray*}
{\it\Phi} (\g, g) 
&=& \int_{\AA/k} \psi^{-1} (u) \phi \left(
\begin{pmatrix}1&\g^{-1}u\\0&1\end{pmatrix} g \right) \ \dd u \\
&=& \int_{\AA/k} \psi^{-1} (u) \phi \left(
\begin{pmatrix}\g & u\\0&1\end{pmatrix} g \right) \ \dd u \\
&=& \W_\phi \left( \begin{pmatrix}\g & 0\\0&1\end{pmatrix} g \right)
\end{eqnarray*}
where the first equality follows from the fact that the module of $\g$ is $1$
by the product formula,
and the second one follows from the fact that $\phi$ is $P_2(k)$-invariant.

Now let $U=U(\phi,g)$ be a compact open subgroup of $\AA/k$ such that 
\begin{equation*}
\phi \left(\begin{pmatrix}1&u\\0&1\end{pmatrix} g \right)
=\phi (g)
\quad
\text{for all $u\in U$}.
\end{equation*}
Let $f$ be the characteristic function of $U$.
Thus
\begin{equation*}
F (\g) = \int\limits_{U} \psi(\g u) \ \dd u.
\end{equation*}
On the one hand, we have
\begin{equation*}
\label{intLHS}
\int_{\AA/k}
\phi \left(\begin{pmatrix}1&u\\0&1\end{pmatrix} g \right)
f(u) \ \dd u
= \phi (g) \cdot |\U| 
\end{equation*}
(where $|\U|$ is the volume of $\U$ with respect to $\dd u$).
On the other hand, we have
\begin{equation*}
\int_{\AA/k}
\phi \left( \begin{pmatrix}1&u\\0&1\end{pmatrix} g \right)
f(u) \ \dd u
= \sum\limits_{ \g\in k^\times} \W_\phi \left( \begin{pmatrix}\g &
  0\\0&1\end{pmatrix} g \right) \cdot \int\limits_{U} \psi(\g u) \ \dd u,
\end{equation*}
which gives the identity
\begin{equation*}
\phi (g)
= \sum\limits_{ \g\in k^\times} \W_\phi \left( \begin{pmatrix}\g &
0\\0&1\end{pmatrix} g \right) \cdot 
\int\limits_{U} \psi(\g u) \ \frac {\dd u} {|U|}.
\end{equation*}
As $\W_\phi \left( \begin{pmatrix}\g &0\\0&1\end{pmatrix} g \right) \in \I$, for all $\g \in k^{\times}$,
and 
\begin{equation*} 
\int\limits_{U} \psi(\g u) \ \frac {\dd u} {|U|} = 
\begin{cases} 0 & \text{if $\psi$ is non-trivial on $\g\U$}, \\
1 & \text{otherwise},
\end{cases}
\end{equation*} 
it only remains to prove that the sum over $\g$ is finite,
that is,
there are only finitely many $\g\in k^\times$ such that $\g\U\subseteq\Ker(\psi)$.
Assume that 
\begin{equation*}
U = \prod\limits_{v\in S} \p_v^{m_v}\times \prod\limits_{v\notin S}
\Oo_v^{\phantom{m}}
\end{equation*}
for some finite set $S$ of places of $k$ and some integers $m_v\in\ZZ$.
Recall that we have the~decom\-po\-sition \eqref{declocpsi} of $\psi$.
By taking a bigger $S$ if necessary,
we may (and will) assume that the character $\psi_v$ is trivial on $\Oo_v$
but not on $\p_v^{-1}$ for all $v \notin S$.  
Thus $\g\U\subseteq\Ker(\psi)$ if and only if
$\g\Oo_v\subseteq\Ker(\psi_v)$ for all $v\notin S$ and
$\g\p_v^{m_v}\subseteq\Ker(\psi_v)$ for all $v\in S$.
Equivalently, this means that $\g$ belongs to the space of
$f\in k$, 
considered as rational functions on the curve $X$ defining
the field $k$, such that
\begin{itemize}
\item $f$ has no pole at $v\notin S$, 
\item $f$ has a pole of order $\>-m_v$ at $v\in S$.
\end{itemize} 
The expected finiteness result now follows from the fact that these $f$
form a finite dimensional vector space over $\FF_q$
(see for instance \cite{Stichtenoth} Proposition 1.4.9).

\subsection{}

We now assume that $n\>3$,
and that Theorem \ref{GHthm} has been proved for $\P_{n-1}(\AA)$.

We fix an arbitrary $g\in\P_n(\AA)$ and define a function
$\phi' = \phi'_{g}$ on $\P_{n-1}(\AA)$ by setting
\begin{equation*}
\phi'(h) = 
\int \limits_{(\AA/k)^{n-1}} \psi^{-1}(\n \cdot u)
\phi \left( 
\begin{pmatrix}h&u\\0&1\end{pmatrix} 
g \right) \ \dd u
\end{equation*}
for all $h\in\P_{n-1}(\AA)$, 
where $\n=(0,\dots,0,1)\in k^{n-1}$
and $\a\cdot u = a_1u_1+\dots\a_{n-1}u_{n-1} \in (\AA/k)^{n-1}$
for any $\a\in k^{n-1}$ and $u\in(\AA/k)^{n-1}$.
It has the following properties.

\begin{lemm}
\label{WgProp1}
The function $\phi'$ is cuspidal on $\P_{n-1}(\AA)$, 
that is, 
one has
\begin{equation*}
\int\limits_{N_{m,n-1-m}(k) \backslash N_{m,n-1-m}(\AA)} \phi'(vh) \ \dd v= 0
\end{equation*}
for all $h\in\P_{n-1}(\AA)$ and all $m\in\{1,\dots,n-2\}$.
\end{lemm}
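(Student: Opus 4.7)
\bigskip

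\noindent\textbf{Proof plan for Lemma \ref{WgProp1}.} The plan is to recognize that an element $v\in N_{m,n-1-m}(\AA)$, once embedded into $\GL_n$ as $\tilde v = \begin{pmatrix} v & 0\\ 0 & 1\end{pmatrix}$, sits inside $N_{m,n-m}(\AA)$; combining it with the top $m$ components of the Fourier integration variable $u$ appearing in the definition of $\phi'$ will reconstitute the full unipotent $N_{m,n-m}(\AA)$ in $\GL_n$. Cuspidality of $\phi$ (Assumption 2 of Theorem \ref{GHthm}) will then give the vanishing.

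\medskip

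\noindent\emph{Step 1: unwind and swap.} Fix $h\in\P_{n-1}(\AA)$ and, for $v\in N_{m,n-1-m}(\AA)$, write $\tilde v = \begin{pmatrix} v & 0 \\ 0 & 1\end{pmatrix}\in\GL_n(\AA)$. Observe that $\begin{pmatrix} vh & u \\ 0 & 1\end{pmatrix}=\tilde v\,\begin{pmatrix} h & \tilde v^{-1}u \\ 0 & 1\end{pmatrix}$. Performing the change of variable $u\mapsto \tilde v u$ (Jacobian $1$) in the definition of $\phi'(vh)$, and noting that $\n\cdot(\tilde v u)=\n\cdot u$ because $\tilde v$ is upper-triangular unipotent with $1$ in the bottom-right corner, one gets
\begin{equation*}
\phi'(vh)=\int_{(\AA/k)^{n-1}}\psi^{-1}(\n\cdot u)\,\phi\!\left(\tilde v\begin{pmatrix} h & u\\ 0 & 1\end{pmatrix} g\right)du.
\end{equation*}
Integrating over $v$ and applying Fubini, the double integral can be rewritten with the $dv$-integration as the innermost one.

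\medskip

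\noindent\emph{Step 2: factorise $\tilde v\cdot u'(u)$ through $N_{m,n-m}$.} Split $u=\begin{pmatrix}a\\ b\end{pmatrix}$ with $a\in(\AA/k)^m$, $b\in(\AA/k)^{n-1-m}$; a direct block multiplication shows
\begin{equation*}
\tilde v\cdot\begin{pmatrix}I_{n-1} & u\\ 0 & 1\end{pmatrix}
=\begin{pmatrix}I_m & X & a+Xb\\ 0 & I_{n-1-m} & 0\\ 0 & 0 & 1\end{pmatrix}\cdot\begin{pmatrix}I_m & 0 & 0\\ 0 & I_{n-1-m} & b\\ 0 & 0 & 1\end{pmatrix},
\end{equation*}
where $v=\begin{pmatrix}I_m & X\\ 0 & I_{n-1-m}\end{pmatrix}$, the left factor lies in $N_{m,n-m}(\AA)$ with parameter $(X,a+Xb)$, and the right factor, call it $n_2(b)$, lies in $N_{n-1,1}(\AA)$ and depends only on $b$.

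\medskip

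\noindent\emph{Step 3: change of variable and cuspidality.} Since $\n\cdot u$ equals the last entry of $b$, the character $\psi^{-1}(\n\cdot u)$ is independent of $a$. Performing the change of variable $a\mapsto a-Xb$ in the $da$-integral (which does not touch the character), the left factor above becomes the generic element of $N_{m,n-m}(\AA)$ with parameter $(X,a)$. The combined $(dv,da)$-integration then runs exactly over $N_{m,n-m}(k)\backslash N_{m,n-m}(\AA)$, so that
\begin{equation*}
\int_{N_{m,n-1-m}(k)\backslash N_{m,n-1-m}(\AA)}\phi'(vh)\,dv
=\int_{(\AA/k)^{n-1-m}}\!\!\!\psi^{-1}(\n\cdot u)\left(\int_{N_{m,n-m}(k)\backslash N_{m,n-m}(\AA)}\!\!\!\phi(n\cdot g_b)\,dn\right)db,
\end{equation*}
where $g_b=n_2(b)\begin{pmatrix} h & 0\\ 0 & 1\end{pmatrix}g\in\P_n(\AA)$. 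For each fixed $b$, the inner integral vanishes by Assumption 2 of Theorem \ref{GHthm} applied to $\phi$ at $g_b$, since $m\in\{1,\dots,n-2\}\subseteq\{1,\dots,n-1\}$. This proves cuspidality of $\phi'$.

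\medskip

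\noindent\emph{Main difficulty.} There is no conceptual obstacle, only bookkeeping: one must keep track carefully of the block decomposition and verify that the character $\psi^{-1}(\n\cdot u)$ is invariant both under the change of variable $u\mapsto\tilde v u$ of Step 1 and under the change $a\mapsto a-Xb$ of Step 3. Both invariances follow from the fact that $\n=(0,\dots,0,1)$ only picks out the last coordinate, which lies in the $b$-block and is fixed by upper-triangular unipotent left-multiplication.
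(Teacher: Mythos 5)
Your argument is correct and is essentially the paper's own proof: both swap the order of integration, split $u=(a,b)$ with $a\in(\AA/k)^m$, observe that the character $\psi^{-1}(\n\cdot u)$ only sees $b$, and recombine the $v$-integration with the $a$-integration into an integration over $N_{m,n-m}(k)\backslash N_{m,n-m}(\AA)$, to which the cuspidality of $\phi$ applies. The only blemish is the displayed block factorisation in Step 2, whose left factor should carry the parameter $(X,a)$ rather than $(X,a+Xb)$ (as written the product has $a+2Xb$ in the upper-right corner); with the corrected factorisation the change of variable $a\mapsto a-Xb$ in Step 3 becomes unnecessary, and the final identity and the conclusion are unaffected.
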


\begin{proof}
Let us fix an $h\in\P_{n-1}(\AA)$ and an $m\in\{1,\dots,n-2\}$.
Then 
\begin{equation}
\label{Ottoline}
\int\limits_{N_{m,n-1-m}(k) \backslash N_{m,n-1-m}(\AA)} \phi'(vh) \ \dd v = 
\int \limits_{(\AA/k)^{n-1}} \psi^{-1}(\n \cdot u) {\it\Omega}_{g,h}(u) 
\ \dd u 
\end{equation}
where
\begin{equation*}
{\it\Omega}_{g,h}(u) = 
\int\limits_{N_{m,n-1-m}(k) \backslash N_{m,n-1-m}(\AA)}
\phi \left( 
\begin{pmatrix} v h&u\\0&1\end{pmatrix} 
g \right) \ \dd v 
\end{equation*}
and the right hand side of \eqref{Ottoline} is equal to
\begin{equation*}
\int \limits_{(\AA/k)^{n-1-m}} \psi^{-1}(\n \cdot u_2) 
\int \limits_{(\AA/k)^{m}} {\it\Omega}_{g,h} \binom {u_1} {u_2} \ \dd u_1 \ \dd u_2 = 
\int \limits_{(\AA/k)^{n-1-m}} \psi^{-1}(\n \cdot u_2) 
{\it\Lambda}_{g,h}(u_2) \ \dd u_2
\end{equation*}
where
\begin{equation*}
{\it\Lambda}_{g,h}(u_2) = 
\int\limits_{N_{m,n-m}(k) \backslash N_{m,n-m}(\AA)} \phi \left( w
\begin{pmatrix} 1&0&0\\0&1&u_2\\0&0&1\end{pmatrix} 
\begin{pmatrix} h&0\\0&1\end{pmatrix} 
g \right) \ \dd w 
\end{equation*}
and this quantity is equal to $0$ 
thanks to the fact that $\phi$ is cuspidal.
\end{proof}

\begin{lemm}
\label{WgProp2}
One has
\begin{equation*}
\phi'(\a h)=\phi'(h)
\end{equation*}
for all $\a\in\P_{n-1}(k)$ and $h\in\P_{n-1}(\AA)$.
\end{lemm}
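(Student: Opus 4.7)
The plan is to establish $\phi'(\alpha h)=\phi'(h)$ by unfolding the definition of $\phi'$, using the matrix factorization
\[
\begin{pmatrix}\alpha h & u \\ 0 & 1\end{pmatrix}=\begin{pmatrix}\alpha & 0 \\ 0 & 1\end{pmatrix}\begin{pmatrix}h & \alpha^{-1}u \\ 0 & 1\end{pmatrix}
\]
to peel off $\alpha$, and then performing the change of variable $u\mapsto\alpha u$ in the integral over $(\AA/k)^{n-1}$.

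First, I would substitute this factorization into the defining integral of $\phi'(\alpha h)$. Since $\alpha\in P_{n-1}(k)$, the block-diagonal matrix $\begin{pmatrix}\alpha & 0 \\ 0 & 1\end{pmatrix}$ lies in $P_n(k)$, so assumption~(1) of Theorem~\ref{GHthm}, namely the left $P_n(k)$-invariance of $\phi$, allows me to discard it from the argument of $\phi$. The integrand then becomes $\psi^{-1}(\nu\cdot u)\,\phi\bigl(\begin{pmatrix}h & \alpha^{-1}u \\ 0 & 1\end{pmatrix} g\bigr)$.

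Next, I would perform the substitution $u\mapsto\alpha u$ on $(\AA/k)^{n-1}$. This is a well-defined automorphism because $\alpha$ has entries in $k$ and is invertible, so it preserves the lattice $k^{n-1}$; moreover, the Jacobian has adelic absolute value $|\det\alpha|_{\AA}=1$ by the product formula, so the Haar measure on $(\AA/k)^{n-1}$ is preserved. It then remains to check that $\psi^{-1}(\nu\cdot\alpha u)=\psi^{-1}(\nu\cdot u)$. Regarding $\nu=(0,\dots,0,1)$ as a row vector, one has $\nu\cdot(\alpha u)=(\nu\alpha)\cdot u$, and $\nu\alpha$ is precisely the last row of $\alpha$, which equals $(0,\dots,0,1)=\nu$ by the defining property of the mirabolic subgroup $P_{n-1}$. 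The resulting integral is then manifestly equal to $\phi'(h)$.

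There is no genuine obstacle here: once the factorization is in place, the argument reduces to routine bookkeeping. The only point requiring a moment of care is the invocation of the product formula to ensure that the change of variable is measure-preserving on the compact quotient $(\AA/k)^{n-1}$; this is exactly the mechanism already used at the end of the $n=2$ case treated in Section~\ref{par41}.
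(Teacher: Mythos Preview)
Your proof is correct and follows essentially the same approach as the paper: factor out $\begin{pmatrix}\alpha & 0\\ 0 & 1\end{pmatrix}\in P_n(k)$, use the $P_n(k)$-invariance of $\phi$, change variables $u\mapsto\alpha u$, and conclude using $\nu\alpha=\nu$. The paper compresses these steps into a single displayed equality, whereas you spell them out (including the measure-preservation justification via the product formula), but the argument is the same.
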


\begin{proof}
Let us fix an $\a\in\P_{n-1}(k)$.
Thanks to the fact that $\phi$ is $\P_n(k)$-invariant, one has
\begin{eqnarray*}
\phi'(\a h) &=& \int \limits_{(\AA/k)^{n-1}} \psi^{-1}(\n \cdot u)
\phi \left( 
\begin{pmatrix}\a h&u\\0&1\end{pmatrix} 
g \right) \ \dd u \\ 
&=& \int \limits_{(\AA/k)^{n-1}} \psi^{-1}(\n\a \cdot u)
\phi \left( 
\begin{pmatrix}h&u\\0&1\end{pmatrix} 
g \right) \ \dd u.
\end{eqnarray*}
Since $\a\in P_{n-1}(k)$,
we get $\n\a=\n$,
thus $\phi'(\a h)=\phi' (h)$.
\end{proof}

\begin{lemm}
\label{WgProp3}
One has
\begin{equation*}
\int \limits_{\N_{n-1}(k) \backslash \N_{n-1}(\AA)}
\psi^{-1}(v) \phi' ( v h ) \ \dd v \in \I
\end{equation*}
for all $h\in \P_{n-1}(\AA)$.
\end{lemm}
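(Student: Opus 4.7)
My plan is to prove Lemma \ref{WgProp3} by a direct unfolding: substitute the definition of $\phi'$ into the integral, fuse the two integrations into a single integration over $\N_n(k)\backslash\N_n(\AA)$, and recognize the result as the value of $W_\phi$ at a point of $\P_n(\AA)$, which lies in $\I$ by Assumption 3 of Theorem \ref{GHthm}.

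Concretely, starting from
\begin{equation*}
\int_{\N_{n-1}(k)\backslash\N_{n-1}(\AA)}\psi^{-1}(v)\phi'(vh)\,\dd v,
\end{equation*}
I would insert the definition of $\phi'$ and observe that, since $\n\cdot u=u_{n-1}$, the product $\psi^{-1}(v)\psi^{-1}(\n\cdot u)$ is exactly the restriction of the global character $\psi^{-1}$ of $\N_n(\AA)$ to the element
\begin{equation*}
\begin{pmatrix}v & u\\ 0 & 1\end{pmatrix}\in\N_n(\AA).
\end{equation*}
Using the semidirect decomposition $\N_n=\N_{n-1}\ltimes\mathbb{G}_a^{n-1}$, the product of the Haar measures on $\N_{n-1}(k)\backslash\N_{n-1}(\AA)$ and on $(\AA/k)^{n-1}$ gives (up to the normalisation fixed in \S\ref{WhittakerFourrier}) the Haar measure on $\N_n(k)\backslash\N_n(\AA)$. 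Combined with the factorisation
\begin{equation*}
\begin{pmatrix}vh & u\\ 0 & 1\end{pmatrix}
=\begin{pmatrix}v & u\\ 0 & 1\end{pmatrix}\begin{pmatrix}h & 0\\ 0 & 1\end{pmatrix},
\end{equation*}
the iterated integral collapses into
\begin{equation*}
\int_{\N_n(k)\backslash\N_n(\AA)}\psi^{-1}(n)\,\phi\!\left(n\begin{pmatrix}h & 0\\ 0 & 1\end{pmatrix}g\right)\dd n
=W_\phi\!\left(\begin{pmatrix}h & 0\\ 0 & 1\end{pmatrix}g\right).
\end{equation*}
Since $\begin{pmatrix}h & 0\\ 0 & 1\end{pmatrix}g\in\P_n(\AA)$, Assumption 3 of Theorem \ref{GHthm} gives that this value is in $\I$, as required.

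The only point that requires a little care is the measure-theoretic compatibility: I would either invoke the standard Fubini decomposition of $\dd n$ along the filtration of $\N_n$ by the root subgroups, or simply note that both sides of the final identity are continuous linear functionals of $\phi$ whose proportionality constant is forced by specialisation to a single test function. No other obstacle arises, since all invariances needed ($\N_{n-1}(k)$-invariance of the inner integrand and $k^{n-1}$-invariance of the $u$-integrand) are built into the definitions of $\phi'$ and $W_\phi$ respectively.
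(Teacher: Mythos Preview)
Your proposal is correct and follows exactly the same approach as the paper: unfold the definition of $\phi'$, use the decomposition $\N_n \simeq \N_{n-1}\ltimes \mathbb{G}_a^{n-1}$ to merge the two integrals into a single integral over $\N_n(k)\backslash\N_n(\AA)$, and identify the result as $W_\phi\!\left(\begin{pmatrix}h&0\\0&1\end{pmatrix}g\right)\in\I$. The paper states this in two lines without justifying the measure compatibility; your added remarks on this point are a welcome clarification but do not constitute a different argument.
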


\begin{proof}
It suffices to notice that 
\begin{eqnarray*}
\int \limits_{\N_{n-1}(k) \backslash \N_{n-1}(\AA)}
\psi^{-1}(v) \phi' ( v h ) \ \dd v &=&
\int \limits_{\N_{n}(k) \backslash \N_{n}(\AA)}
\psi^{-1}(w) \phi \left( w
\begin{pmatrix}h&0\\0&1\end{pmatrix} 
g \right) \ \dd w \\ &=& 
W_\phi\left( 
\begin{pmatrix}h&0\\0&1\end{pmatrix} 
g \right) 
\end{eqnarray*}
which takes values in $\I$ for all $g\in P_n(\AA)$ and $h\in \P_{n-1}(\AA)$.
\end{proof}

Applying now the inductive hypothesis to the function $\phi'=\phi'_g$,
we deduce that
\begin{equation}
\label{Caitlin}
\phi'_g(h)\in\I, \quad \text{for all $g \in \P_n(\AA)$ and all $h\in\P_{n-1}(\AA)$}.
\end{equation}
We can do even better.

\begin{lemm}
\label{Samantha}
For all $g \in \P_n(\AA)$ and all $g'\in\GL_{n-1}(\AA)$, we have
\begin{equation}
\int \limits_{(\AA/k)^{n-1}} \psi^{-1}(\n \cdot u)
\phi \left( 
\begin{pmatrix}g'&u\\0&1\end{pmatrix} g \right) \ \dd u \in \I.
\end{equation}
\end{lemm}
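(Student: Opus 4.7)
The plan is to reduce Lemma~\ref{Samantha} directly to~\eqref{Caitlin} by absorbing the $\GL_{n-1}(\AA)$-factor $g'$ into the base point $g \in \P_n(\AA)$. My key observation will be that for any $g' \in \GL_{n-1}(\AA)$, the block matrix $\begin{pmatrix}g' & 0 \\ 0 & 1\end{pmatrix}$ is invertible with last row $(0,\dots,0,1)$, and therefore already lies in $\P_n(\AA)$. Setting $g'' := \begin{pmatrix}g' & 0 \\ 0 & 1\end{pmatrix} g$, I will then have $g'' \in \P_n(\AA)$ whenever $g \in \P_n(\AA)$.

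The next step is to invoke the identity
$$\begin{pmatrix}1 & u \\ 0 & 1\end{pmatrix} \begin{pmatrix}g' & 0 \\ 0 & 1\end{pmatrix} = \begin{pmatrix}g' & u \\ 0 & 1\end{pmatrix},$$
which lets me rewrite the integrand in Lemma~\ref{Samantha} as $\phi\left(\begin{pmatrix}1 & u \\ 0 & 1\end{pmatrix} g''\right)$. The integral in question thus coincides with $\phi'_{g''}(1)$ in the notation of the paragraph preceding Lemma~\ref{WgProp1}, with $g''$ playing the role of the base point and $h=1$ being the identity of $\P_{n-1}(\AA)$. Since $1 \in \P_{n-1}(\AA)$ and $g'' \in \P_n(\AA)$, the conclusion $\phi'_{g''}(1) \in \I$ then follows at once from~\eqref{Caitlin}, already established via the inductive hypothesis applied to~$\phi'_{g''}$.

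There is essentially no obstacle here: the whole argument reduces to the remark that left translation by $\begin{pmatrix}g' & 0 \\ 0 & 1\end{pmatrix}$ can be folded into the base point $g$, because this block matrix already belongs to $\P_n(\AA)$. This sidesteps any need for an Iwasawa-type decomposition or strong approximation for $\GL_{n-1}(\AA)$, which would have been the main difficulty had one instead tried to write $g'$ itself as a product of a $k$-rational matrix and an element of $\P_{n-1}(\AA)$.
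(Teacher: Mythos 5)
Your proof is correct and is essentially identical to the paper's: both absorb the block matrix $\begin{pmatrix}g'&0\\0&1\end{pmatrix}\in\P_n(\AA)$ into the base point and identify the integral with $\phi'_x(1)$ for $x=\begin{pmatrix}g'&0\\0&1\end{pmatrix}g\in\P_n(\AA)$, then invoke \eqref{Caitlin}.
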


\begin{proof}
Indeed, we have
\begin{equation*}
\int \limits_{(\AA/k)^{n-1}} \psi^{-1}(\n \cdot u)
\phi \left( 
\begin{pmatrix}g'&u\\0&1\end{pmatrix} g \right) \ \dd u =
\int \limits_{(\AA/k)^{n-1}} \psi^{-1}(\n \cdot u)
\phi \left( 
  \begin{pmatrix}1&u\\0&1\end{pmatrix}
  \begin{pmatrix}g'&0\\0&1\end{pmatrix} g \right) \ \dd u 
\end{equation*}
which is equal to $\phi'_{x}(1)$ with
\begin{equation*}
x = \begin{pmatrix}g'&0\\0&1\end{pmatrix} g \in \P_{n}(\AA).
\end{equation*}
The lemma thus follows from \eqref{Caitlin} applied to the function $\phi'_x$. 
\end{proof}

We now extend $\phi'=\phi'_g$ to $\GL_{n-1}(\AA)$ by setting
\begin{equation*}
\phi'(g') = 
\int \limits_{(\AA/k)^{n-1}} \psi^{-1}(\n \cdot u)
\phi \left( 
\begin{pmatrix}g'&u\\0&1\end{pmatrix} g \right) \ \dd u
\end{equation*}
for all $g'\in\GL_{n-1}(\AA)$.
By Lemma \ref{Samantha},
it takes values in $\I$ on $\GL_{n-1}(\AA)$ for all $g\in\P_{n}(\AA)$. 

Now, 
by Fourier analysis on the compact Abelian group $(\AA/k)^{n-1}$, we have
\begin{eqnarray*}
\phi \left( 
\begin{pmatrix}1&u\\0&1\end{pmatrix} 
g \right) &=&
\sum \limits_{\b\in k^{n-1}} \psi(\b \cdot u) 
\int \limits_{ (\AA/k)^{n-1} } \psi^{-1}(\b \cdot x) \phi \left( 
\begin{pmatrix}1&x\\0&1\end{pmatrix} 
g \right) \ \dd x \\
&=& \sum \limits_{\rho\in P_{n-1}(k) \backslash \GL_{n-1}(k)}
\psi(\n\rho \cdot u) 
\int \limits_{ (\AA/k)^{n-1} } \psi^{-1}(\n \cdot x) \phi \left( 
\begin{pmatrix}\rho&x\\0&1\end{pmatrix} 
g \right) \ \dd x \\
&=& \sum \limits_{\rho\in P_{n-1}(k) \backslash \GL_{n-1}(k)}
\psi(\n\rho \cdot u) \phi'\left( \rho\right).
\end{eqnarray*}
Multiplying by $f(u)$ for some function
$f\in\Cc^\infty((\AA/k)^{n-1},\CC)$ and integrating, we get
\begin{equation*}
\int \limits_{(\AA/k)^{n-1}} 
\phi \left(\begin{pmatrix}1&u\\0&1\end{pmatrix} g \right) f(u) \ \dd u = 
\sum \limits_{\rho\in P_{n-1}(k) \backslash \GL_{n-1}(k)}
\phi'( \rho) \int \limits_{(\AA/k)^{n-1}} 
\psi(\n\rho \cdot u) f(u) \ \dd u.
\end{equation*} 
Now let $U=U(\phi,g)$ be a compact open subgroup of $\AA/k$ such that 
\begin{equation*}
\phi \left( \begin{pmatrix}1&u\\0&1\end{pmatrix} g \right) = \phi (g)
\quad
\text{for all $u\in U^{n-1} \subseteq (\AA/k)^{n-1}$}.
\end{equation*}
Now take for $f$ the characteristic function of $U^{n-1}$.
We get
\begin{equation}
\label{PembrokeLodge}
\phi(g) = 
\sum \limits_{\rho\in P_{n-1}(k) \backslash \GL_{n-1}(k)}
\phi'( \rho) \int \limits_{U^{n-1}}
\psi(\n\rho \cdot u) \ \frac {\dd u} {|U|^{n-1}}.
\end{equation}
For all $\rho$,
we have
\begin{equation*}
\int\limits_{U^{n-1}} \psi(\n\rho \cdot u) \ \frac {\dd u} {|U|^{n-1}} = 
\left\{
\begin{array}{ll} 0 & \text{if $\psi$ is non-trivial on $\n\rho\cdot\U^{n-1}$}, \\
1 & \text{otherwise}.
\end{array}
\right. 
\end{equation*}
A coset $\rho\in P_{n-1}(k) \backslash \GL_{n-1}(k)$
satisfies $\n\rho\cdot\U^{n-1}\subseteq\Ker(\psi)$
if and only if the vector
\begin{equation*}
\b=(\b_1,\dots,\b_{n-1})=\n\rho\in k ^{n-1}
\end{equation*}
satisfies $\b\cdot\U^{n-1}\subseteq\Ker(\psi)$,
that is, $\b_i U\subseteq\Ker(\psi)$ for all $i$. 
But it follows from the case where $n=2$ that there are finitely many $\b_i\in k$ 
such that $\b_i U\subseteq\Ker(\psi)$. 
There are thus finitely many cosets $\rho$ contributing to the sum
\eqref{PembrokeLodge}.

Moreover,
$\phi'(\rho) \in \I$~for all $g\in P_n(\AA)$ and all $\rho\in P_{n-1}(k) 
\backslash \GL_{n-1}(k)$. 
It follows that $\phi(g)\in\I$ for all $g\in P_n(\AA)$.
This finishes the proof of Theorem \ref{GHthm}.

\subsection{}

As in Paragraph \ref{par33},
$k$ is a function field of characteristic $p$ and
$\I$ is a sub-$\ZZ[\mu_{p}]$-module of $\CC$.
Let $Z=Z_n$ denote the centre of $\GL_n$.
We will prove the following result.

\begin{theo}
\label{MSthm}
Let $\phi:\GL_n(\AA)\to\CC$ be a smooth function such that:
\begin{enumerate}
\item 
one has $\phi(\g g)=\phi(g)$ for all $\g\in\GL_n(k)$ and all $g\in\GL_n(\AA)$, 
\item 
the function $\phi$ is cuspidal in the sense that
\begin{equation*}
\int\limits_{N_{m,n-m}(k) \backslash N_{m,n-m}(\AA)} \phi(ug) \ \dd u = 0
\end{equation*}
for all $g\in\GL_n(\AA)$ and all $m\in\{1,\dots,n-1\}$,
\item 
one has $\phi(g)\in\I$ for all $g\in Z(\AA) P(\AA)$.
\end{enumerate}
Then
\begin{equation*}
\int\limits_{\N(k)\backslash\N(\AA)} \psi(u)^{-1}\phi(ug)\ \dd u \in \I
\end{equation*}
for all $g\in \GL_n(\AA)$.
\end{theo}

We first prove the following lemma. 

\begin{lemm}
\label{density}
The image of $\GL_{n}(k)\Z(\AA)\P(\AA)$ in $\GL_{n}(k)\backslash\GL_n(\AA)$ 
is dense.
\end{lemm}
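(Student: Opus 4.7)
The lemma is equivalent to density of $\GL_n(k) Z(\AA) P(\AA)$ in $\GL_n(\AA)$. My plan is to identify the quotient $\GL_n(\AA)/Z(\AA)P(\AA)$ with $\prod_v \PP^{n-1}(k_v)$, observe that the image of $\GL_n(k)$ is the diagonal copy of $\PP^{n-1}(k)$, and then deduce density from weak approximation.

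The subgroup $Z\cdot P$ is the maximal parabolic of $\GL_n$ stabilizing the line $k\cdot e_n\subseteq k^n$ under the right action on row vectors, so $\GL_n/ZP\cong\PP^{n-1}$ as $\GL_n$-varieties. Passing to adelic points, the quotient $\GL_n(\AA)/Z(\AA)P(\AA)$ is naturally the restricted direct product of the local quotients $\GL_n(k_v)/Z(k_v)P(k_v)\cong\PP^{n-1}(k_v)$, each of which is compact, so it simply equals $\prod_v\PP^{n-1}(k_v)$. Under this identification, and because $\GL_n(k)$ acts transitively on $k^n\setminus\{0\}$, the image of $\GL_n(k)$ is exactly $\PP^{n-1}(k)$ diagonally embedded.

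The crux is then to show that $\PP^{n-1}(k)$ is dense in $\prod_v\PP^{n-1}(k_v)$. I would start with an arbitrary nonempty basic open set $U=\prod_{v\in S}U_v\times\prod_{v\notin S}\PP^{n-1}(k_v)$ with $S$ a finite set of places and each $U_v$ a nonempty open subset of $\PP^{n-1}(k_v)$. Replacing each $U_v$ by its intersection with the standard affine chart $\{x_0\neq0\}\cong k_v^{n-1}$ (nonempty since the complementary hyperplane is nowhere dense in $\PP^{n-1}(k_v)$), one can view each $U_v$ as a nonempty open of $k_v^{n-1}$. Weak approximation of $k$ in $\prod_{v\in S}k_v$ then supplies $(q_1,\dots,q_{n-1})\in k^{n-1}$ whose image lies in $U_v$ at every $v\in S$, and the point $[1:q_1:\cdots:q_{n-1}]\in\PP^{n-1}(k)$ lies in $U$.

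To conclude, since the projection $\GL_n(\AA)\to\GL_n(\AA)/Z(\AA)P(\AA)$ is open (being the quotient by a closed subgroup of a topological group), density of $\PP^{n-1}(k)$ in the quotient lifts to density of its preimage $\GL_n(k)\cdot Z(\AA)P(\AA)$ in $\GL_n(\AA)$: given a nonempty open $V\subseteq\GL_n(\AA)$, its image in the quotient is open and meets $\PP^{n-1}(k)$, producing $\gamma\in\GL_n(k)$ with $\gamma Z(\AA)P(\AA)\cap V\neq\emptyset$. The main obstacle is the density of $\PP^{n-1}(k)$ in $\prod_v\PP^{n-1}(k_v)$; the remaining steps are essentially bookkeeping about the topological identification of the quotient with a projective space.
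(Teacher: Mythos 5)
Your proof is correct and follows essentially the same route as the paper: identify $\GL_n/\Z\P$ with $\PP^{n-1}$, reduce density of the diagonal $\PP^{n-1}(k)$ in $\PP^{n-1}(\AA)=\prod_v\PP^{n-1}(k_v)$ to weak approximation on an affine chart, and invoke the weak approximation theorem for $k$. The only differences are cosmetic (choice of chart, and your making explicit the openness of the quotient map and the compactness argument for the restricted product, which the paper leaves implicit).
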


\begin{proof}
Since the projective space $\PP^{n-1}$ is $k$-isomorphic to 
$\GL_n/\Z\P$, it is equivalent to saying that the image of the 
diagonal embedding of $\PP^{n-1}(k)$ in $\PP^{n-1}(\AA)$ is dense,
that is, $\PP^{n-1}$~satisfies the strong approximation property.
We thus have to prove that, 
given any finite set $S$ of places of $k$ and,
for $v\in S$,
any non-empty open subset $U_v$ of $\PP^{n-1}(k_v)$, 
the intersection
\begin{equation*}
\left(\prod\limits_{v\in S} U_v \times \prod\limits_{v\notin S} 
\PP^{n-1}(\Oo_v) \right) \cap \PP^{n-1}(k)
\end{equation*}
is non-empty.
As $\PP^{n-1}(\Oo_v) = \PP^{n-1}(k_v)$ for all $v$
(given any point $[x_1:\dots:x_{n}] \in \PP^{n-1}(k_v)$,~one
may multiply the $x_i$ by a suitable power of a uniformizer so that
the coordinates are in $\Oo_v$),~this
is equivalent to proving the weak approximation property,
that is, 
proving that there is a point $P = [x_1:\dots:x_{n}] \in \PP^{n-1}(k)$
such that $P \in U_v$ for all $v\in S$.
Restricting to the affine~open~sub\-spa\-ce made of all points whose
last coordinate is non-zero, 
we are reduced to proving 
the weak approximation property
for an affine space. 
This follows from the weak approximation theorem for $k$
(see for instance \cite{Stichtenoth} Theorem 1.3.1). 
\end{proof}

By Assumptions 1 and 3, the function $\phi$ takes values in $\I$
on $\GL_{n}(k)\Z(\AA)\P(\AA)$. 
Since $\phi$~is~lo\-cally constant on $\GL_n(\AA)$, 
it follows from Lemma \ref{density}
that $\phi$ takes values in $\I$ on $\GL_{n}(\AA)$.

We now prove the theorem.
Let us apply the map $\phi\mapsto\W_\phi$ of Paragraph 
\ref{WhittakerFourrier} 
defined by \eqref{psiw}.
This defines a function:
\begin{equation*}
W_\phi(g) =
\int\limits_{\N(k)\backslash\N(\AA)} \psi(u)^{-1}\phi(ug)\ \dd u,
\quad
g\in\GL_{n}(\AA).
\end{equation*}
Let us prove that $\W_\phi$ takes values in $\I$ on $\GL_{n}(\AA)$.
Since $\N(k)\backslash\N(\AA)$ is compact,
there exists~a compact open subgroup $C$ of $\N(k)\backslash\N(\AA)$
such that $\N(k)\backslash\N(\AA)$ is the union of finitely many~$u_i C$
and the function
$u \mapsto \psi(u)^{-1}\phi(ug)$
is constant on these cosets.
This gives us
\begin{equation*}
\W_\phi(g) = |C| \cdot \sum \limits_{i=1}^{r} \psi^{-1}(u_i) \phi(u_i g).
\end{equation*}
It thus remains to prove that $|C|$ is in $\ZZ[\mu_{p}]$.

The measure $\dd u$ gives measure $1$ to the compact group
$\N(k)\backslash\N(\AA)$.
Thus $|C|$ is the index of $C$ in $\N(k)\backslash\N(\AA)$.
Let us prove that $|C|$ is a $p$-power.
For this,
it suffices to prove that $\N(k)\backslash\N(\AA)$ is a pro-$p$-group.
For this,
by d\'evissage,
it suffices to~pro\-ve~that $\AA/k$ is a pro-$p$-group.

By \cite{RaVa} Theorem 5.8, 
there are a finite set $S$ of places of $k$ and~inte\-gers $m_v\>0$ for $v\in S$
such that $\AA=k+U$ for some compact open subgroup 
\begin{equation*}
U = \prod\limits_{v\in S} \p_v^{m_v}\times \prod\limits_{v\notin S} \Oo_v^{\phantom{m}}
\end{equation*}
thus $\AA/k$ is a quotient of $U$.
But $U$ is clearly a pro-$p$-group, as it is a product of pro-$p$-groups.

\section{Modular rigidity}

Let $k$ be a global field as in Paragraph \ref{par41}.
Fix a prime number $\ell$
different from the characteristic of $k$
and a field isomorphism $\ii:\CC\to\qlb$.
Let $\AA=\AA_k$ be its ring of ad\`eles. 

Given any place $v$ of $k$, write $\G_v=\GL_n(k_v)$ and $\N_v=\N(k_v)$, 
and $\P_v=\P(k_v)$ for the~mira\-bolic subgroup of $\G_v$.

Recall that $\m_\ell$ is the maximal ideal of $\zlb$.

\subsection{}

Let us state the following conjecture. 

\begin{conj}
\label{MAINCONJ}
Let $\Pi_1$, $\Pi_2$ be cuspidal automorphic representations of
$\GL_n(\AA)$ with central characters $\Om_1$, $\Om_2$, respectively. 
Let $\iota$ be a field isomorphism from $\CC$ to $\qlb$
for some prime number $\ell$ different from the characteristic of $k$. 
Suppose that:
\begin{enumerate}
\item
the characters $\Om_1\otimes_{\CC}\qlb$ and $\Om_2\otimes_{\CC}\qlb$
are $\zlb$-valued
and congruent mod $\m_\ell$,
\item
there exists a finite set $\SS$ of places of $k$,
containing all Archimedean places and all finite places above $\ell$, 
such that, for all $v\notin\SS$, one has:
\begin{enumerate}
\item
the local components $\Pi_{1,v}\otimes_{\CC}\qlb$ and
$\Pi_{2,v}\otimes_{\CC}\qlb$ are unramified,
\item
the characteristic polynomials of their Satake parameters belong to $\zlb[X]$
and have the same reduction mod $\m_\ell$ in $\flb[X]$.
\end{enumerate}
\end{enumerate}
Let $w$ be a finite place not dividing $\ell$
{such that the representations
$\Pi_{1,w}\otimes_{\CC}\qlb$ and $\Pi_{2,w}\otimes_{\CC}\qlb$ are integral.}
Then the reductions mod $\ell$ of these representations
have a com\-mon generic irreducible compo\-nent,
and such a generic~com\-ponent is unique and occurs with multiplicity $1$.
\end{conj}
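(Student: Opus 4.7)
My plan is to mimic, at the place $w$, the strategy of Theorem \ref{MAINTHMintro} outlined in Subsection \ref{reineclaude}: construct Whittaker functions $W_{1,w}$ and $W_{2,w}$ for $\pi_{1,w}$ and $\pi_{2,w}$ that are $\zlb$-valued, normalised to $1$ at the identity, and congruent mod $\m_\ell$, and then invoke Proposition \ref{proplocal}. The two normalisation conditions can be arranged locally, using the assumed integrality of $\pi_{i,w}$ together with Proposition \ref{integralschwartz} to ensure the existence of a suitable function in the Kirillov model; the real content is the congruence, which has to be transferred from the unramified places $v\notin\SS$ by an ad\`elic argument.

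Let $A=\iota^{-1}(\zlb)$ and $\m=\iota^{-1}(\m_\ell)$, and fix a non-trivial continuous unitary character $\psi$ of $\AA/k$ whose local components $\psi_v$ are, for almost all finite $v$, trivial on $\Oo_v$ but not on $\p_v^{-1}$. At each place $v$, choose $W_{i,v}\in\Ww(\Pi_{i,v},\psi_v)$ as follows: for $v\notin\SS$, take the essentially unique $\GL_n(\Oo_v)$-invariant Whittaker function with $W_{i,v}(1)=1$, which by the Shintani formula (Proposition \ref{pagniez}) is $A$-valued and satisfies $W_{1,v}\equiv W_{2,v}$ mod $\m$; for $v\in\SS$, choose $W_{1,v}=W_{2,v}$ whose restriction to $\P_v$ is compactly supported mod $\N_v$ and $A$-valued, with $W_{i,v}(1)=1$. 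Then $W_i=\bigotimes_v W_{i,v}$ lies in $\Ww(\Pi_i,\psi)$, is $A$-valued on $\P(\AA)$, and $W_1-W_2$ is $\m$-valued on $\P(\AA)$.

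Now let $\phi_i\in\Pi_i$ be the cuspidal form whose $\psi$-Whittaker function is $W_i$. Applying Theorem \ref{GHthm} once to $\phi_i$ with $\I=A$ and once to the cuspidal $\P(k)$-invariant function $\phi_1-\phi_2$ with $\I=\m$ shows that $\phi_i$ is $A$-valued and $\phi_1-\phi_2$ is $\m$-valued on $\P(\AA)$. Combining with hypothesis (1) on the central characters, both statements extend to $\Z(\AA)\P(\AA)$. Theorem \ref{MSthm}, applied twice in the same way, then propagates $A$-integrality and the $\m$-congruence back to $W_i$ on all of $\GL_n(\AA)$. Restricting to the $w$-factor and using that $W_{i,v}(1)=1$ for $v\neq w$, we conclude that $W_{1,w}$ and $W_{2,w}$ are $A$-valued on $\G_w$, that $W_{1,w}(1)=W_{2,w}(1)=1$, and that $W_{1,w}$ and $W_{2,w}$ are congruent mod $\m$. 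Proposition \ref{proplocal} applied at $w$ then yields the desired unique common generic irreducible factor with multiplicity one.

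The main obstacle is that Theorems \ref{GHthm} and \ref{MSthm} are established in Section \ref{SEC3} only when $k$ is a function field. Their proofs depend in a crucial way on the fact that $\psi$ is valued in the finite group $\mu_p$ (so that the sub-$\ZZ[\mu_p]$-module hypothesis on $\I$ is preserved under multiplication by values of $\psi$) and on the pro-$p$ structure of $\AA/k$, which keeps the relevant Haar-measure volumes integral. Over a number field, neither property holds: $\psi$ takes values in a dense subgroup of the unit circle, and $\AA/k$ has Archimedean and pro-$\ell$ components that introduce genuine denominators into the Fourier-analytic identities. A proof of Conjecture \ref{MAINCONJ} in the arithmetic setting would therefore need either a refinement of those ad\`elic statements that isolates the $\ell$-adic and Archimedean contributions carefully, or an entirely different global input --- such as the Galois-theoretic route via local-global compatibility of Scholze, Harris--Lan--Taylor--Thorne and Varma already alluded to in Subsection 1.2.
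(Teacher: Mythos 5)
This statement is labelled a \emph{conjecture} in the paper precisely because no one, including the authors, proves it for a general global field: the paper establishes it only when $k$ is a function field (Theorem \ref{MAINTHM}, proved in \S\ref{polar}) and, for totally real or CM number fields with algebraic regular $\Pi_i$, deduces it from Galois-theoretic local-global compatibility (\S\ref{HLTTScholzeVarma}). Your argument is, for the function-field case, essentially identical to the paper's proof of Theorem \ref{MAINTHM} --- same choice of local Whittaker data, same use of Proposition \ref{pagniez} at the unramified places, the same two-pass application of Theorems \ref{GHthm} and \ref{MSthm} with $\I=A$ and $\I=\m$, and the same conclusion via Proposition \ref{proplocal} --- and your diagnosis of why the ad\`elic step breaks down over number fields (loss of the $\mu_p$-valuedness of $\psi$ and of the pro-$p$ structure of $\AA/k$) matches the paper's own framing. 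Two small points of precision: at a place $v\in\SS$ you should not literally set $W_{1,v}=W_{2,v}$ as functions on $\G_v$ (they lie in different Whittaker models when $\Pi_{1,v}\not\simeq\Pi_{2,v}$); rather, as in the paper, fix a common $A$-valued $f_v\in\ind^{\P_v}_{\N_v}(\psi_v)$ and let $W_{i,v}$ be its unique extension in $\Ww(\Pi_{i,v},\psi_v)$, so that only the restrictions to $\P_v$ coincide. Also, the existence of such an $f_v$ needs only the containment $\ind^{\P_v}_{\N_v}(\psi_v)\subseteq\Kk(\Pi_{i,v},\psi_v)$ from Kirillov theory, not Proposition \ref{integralschwartz}; the assumed integrality at $w$ enters (if at all) only to guarantee $A$-valuedness of the extension, which in any case you rederive globally.
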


\begin{rema}
\label{reductiontowinS}
Note that the case where $w\notin\SS$ is easy.
Indeed,
if $w\notin\SS$,
write
\begin{equation*}
\pi_{1}=\Pi_{1,w}\otimes_{\CC}\qlb,
\quad
\pi_2=\Pi_{2,w}\otimes_{\CC}\qlb.
\end{equation*}
These representations are generic
(as $\Pi_{1,w}$ and $\Pi_{2,w}$ are local components of
cuspidal automorphic represen\-ta\-tions)
and unramified.
For each $i$,
there is thus an unramified character $\omega_i$ of
the diagonal torus~$\T_w$~of $\G_w$
whose parabolic induction is isomorphic to $\pi_i$.
By Lemma \ref{deWardes}, 
these representa\-tions are integral,
that is,
the character $\omega_i$ takes values in
$\overline{\mathbb{Z}}{}^{\times}_{\ell}$.
By \cite{MSautomodl}~Proposition~6.2,
the fact that the characteristic polynomials of their Satake
parameters are congruent~im\-plies
that the reductions mod $\m_\ell$ of $\omega_1$ and $\omega_2$
are conjugate by
the normalizer of~$\T_w$ in $\G_w$.
It follows that $\r_\ell(\pi_1)$ and $\r_\ell(\pi_2)$ are equal,
and the conclusion follows from Lemma \ref{raisin}. 
We will~thus~con\-centrate~on the case where $w\in S$.
\end{rema}

\begin{rema}
\label{redunramnongeneric}
The reader should be aware that there are integral 
unramified irreducible $\qlb$-representations of $\GL_n(k_w)$~whose
Satake parameters have congruent characteristic polynomials, but whose
reductions mod~$\ell$ are unequal.
(For~in\-stance, 
this is the case for the trivial~$\qlb$-charac\-ter
and any integral unramified principal series $\qlb$-representation
whose Satake parameter has~a
characteristic polynomial congruent to that of the Satake
parameter of the trivial $\qlb$-character.)
However, 
this phe\-no\-menon does~not appear~for \textit{generic}
unramified representations.
\end{rema}

\begin{rema}
\label{counterex}
The reductions mod $\ell$ of 
$\Pi_{1,w}\otimes_{\CC}\qlb$ and $\Pi_{2,w}\otimes_{\CC}\qlb$
won't be equal in general for $w\in S$.
Here is an example.
Start with a unitary group $\GG$ of rank $2$
with respect to~a~totally imaginary quadratic extension $l$
of a totally real number field $k$.
Suppose that:
\begin{itemize}
\item 
the group $\GG(k_v)$ is compact for all Archimedean places $v$,
\item
there is a finite place $w$ of $k$ above a prime number $p\neq\ell$ such that 
$\GG(k_w)\simeq\GL_2(k_w)$ and $q$,
the cardinality~of the residue field of $\Oo_w$,
has order $2$ mod $\ell$. 
\end{itemize}
Thanks to our assumption on $q$, 
the $\flb$-representation induced from the trivial $\flb$-character
of a~Borel subgroup of $\GL_2(k_w)$ has length $3$:
its irreducible subquotients are the trivial character,
the unramified character of order $2$ and
a cuspidal subquotient denoted $\rho$
(see \cite{VigGL2}~Théorème~3, Corollaire 5). 
Let $\pi$ be a cuspidal~lift of $\rho$ to $\qlb$, 
that is,
$\pi$ is an integral cuspidal $\qlb$-represen\-tation of $\GL_2(k_w)$
such that $\r_\ell(\pi)=\rho$.
(The existence of such a $\pi$ is granted by \cite{Vigb} III.5.10.)

Now realize $\pi$ as the local component at $w$ of some automorphic
representation $\Pi_1$ of $\GG(\AA_k)$ which is trivial at infinity, and
whose local component at another place $u\neq w$ where $\GG$ splits
is a given cuspidal representation $\eta$ of $\GL_2(k_u)$.

We now follow \cite{MSautomodl} Section 3.
Let $\K_w$ be the maximal compact subgroup $\GL_2(\Oo_w)$
and $\FF_q$~be the residue field of $k_w$.
Let:
\begin{itemize}
\item
$\k_1$ be the inflation to $\K_w$ of the cuspidal irreducible representation
of $\GL_2(\FF_{q})$ occurring in the parabolic induction of the trivial
$\flb$-character of a~Borel subgroup 
(thus the restriction of $\pi$ to $K_w$ contains $\k_1$), 
\item
$\k_2$ be the inflation to $\K_w$ of the Steinberg representation of
$\GL_2(\FF_{q})$.
\end{itemize}
Since the reduction mod $\ell$ of $\k_2$ contains that of $\k_1$, 
we get an automorphic representation $\Pi_2$ of $\GG(\AA_k)$ such that:
\begin{itemize}
\item 
the representation $\Pi_2$ is trivial at infinity,
\item
the representation $\Pi_{2,u}$ is isomorphic to $\eta$, 
\item
the restriction of $\Pi_{2,w}$ to $K_w$ contains $\k_2$
(thus $\Pi_{2,w}$ has non-zero Iwahori fixed vectors),
\item
there is a finite set of places $S$ of $k$,
containing all Archimedean places and $u,w$, 
such that,
for all $v\notin S$,
the representations $\Pi_{1,v}\otimes_\CC\qlb$ and
$\Pi_{2,v}\otimes_\CC\qlb$ are unramified
and the characteris\-tic~polynomials of their Satake parameters
have coefficients in $\zlb$ and have the same reduction.  
\end{itemize}

Using \cite{Labesse},
we transfer $\Pi_1$ and $\Pi_2$ to algebraic regular,
conjugate-selfdual,
cuspidal automorphic re\-presenta\-tions $\widetilde{\Pi}_1$ and 
$\widetilde{\Pi}_2$ of $\GL_2(\AA_l)$.
Applying \cite{MSautomodl} Theorem 8.2, 
and as the~local~trans\-fer at $w$ is the identity
since the group $\GG$ splits at $w$,
we deduce that the representations $\r_\ell(\pi)=\rho$ and
$\r_\ell(\Pi_{2,w})$ share a
generic~ir\-reducible~com\-ponent.
Since $\r_\ell$ commutes to parabolic restriction (by \cite{Datnu} Proposition 6.7),
proving that $\r_\ell(\Pi_{2,w})\neq\rho$~redu\-ces to proving that $\Pi_{2,w}$ is not 
cuspidal. 
But this follows from the fact that $\Pi_{2,w}$ has non-zero~Iwa\-hori fixed 
vectors. 
\end{rema}

\subsection{}
\label{HLTTScholzeVarma}

An instance of Conjecture \ref{MAINCONJ} is provided by
\cite{MSautomodl} Theorem 8.2.
More generally,
the results of~\cite{HLTT,Scholze,Varma} 
imply the conjecture in the case when $k$ is a totally real
or imaginary~CM~number field 
and $\Pi_1,\Pi_2$ are algebraic regular,
by passing to the Galois side and~using~a~density~argu\-ment.
In that case, note that:
\begin{itemize}
\item 
Assumption 1 on central characters is unnecessary,
\item
the representations $\Pi_{i,v} \otimes _\CC \qlb$ are
automatically integral for all finite $v$ not dividing $\ell$.
\end{itemize}

More precisely,
assume that $k$ is a totally real or imaginary CM number~field~and~let
$\Pi_1,\Pi_2$ be algebraic regular cuspidal automorphic representation
of $\GL_n(\AA)$.
Assume that
there exists~a finite set $\SS$ of places of $k$, containing all Archimedean
places and all finite places dividing $\ell$, 
such that,
for all $v\notin\SS$,
one has:
\begin{enumerate}
\item
the local components $\Pi_{1,v}$, $\Pi_{2,v}$ are unramified,
\item
the characteristic polynomials of the conjugacy classes of semisimple
elements in $\GL_n(\qlb)$ associated with $\Pi_{1,v} \otimes_{\CC} \qlb$ and
$\Pi_{2,v}\otimes_{\CC} \qlb$ have coefficients in $\zlb$ and are congruent 
mod $\m_\ell$.
\end{enumerate}
Associated with $\Pi_i$ in \cite{HLTT} and \cite{Scholze}, 
there is a continuous $\ell$-adic Galois~repre\-sen\-ta\-tion
\begin{equation*}
\Sigma_i : \Gal(\overline{\QQ}/k) \to \GL_n(\qlb) 
\end{equation*}
(depending on $\iota : \CC \to \qlb$) for $i=1,2$.
For any finite place $v$ of $k$ not dividing $\ell$,
fix a~decom\-po\-si\-tion subgroup $\Ga_v$ of $\Gal(\overline{\QQ}/k)$.
The Weil-Deligne representation associated with
$\Sigma_i|_{\Ga_v}$ is made of a
smooth $\ell$-adic representation $\rho_{i,v}$ together with
a nilpotent operator on the space of $\rho_{i,v}$.
On the other hand,
the Weil-Deligne representation associated with 
$\Pi_{i,v}\otimes|\det|_v^{(1-n)/2}$ by the local Langlands correspondence
is made of a
semisimple smooth complex representation $\s_{i,v}$ together with
a nilpotent operator on the space of $\s_{i,v}$.
By \cite{Varma},
for any finite place $v$ of $k$ not dividing~$\ell$, 
one has
\begin{equation*}
\rho_{i,v}^{\rm ss} \simeq \s_{i,v}^{\phantom{\rm ss}} \otimes_{\CC} \qlb
\end{equation*}
(where $\rho_{i,v}^{\rm ss}$ stand for the semisimplification of $\rho_{i,v}$).
Arguing as in \cite{MSautomodl} 8.2, 
we deduce that,
for any finite place $v$ of $k$ not dividing $\ell$,
the representations $\Pi_{1,v} \otimes _\CC \qlb$ and
$\Pi_{2,v}\otimes _\CC \qlb$ are integral, 
their reductions mod $\ell$ share a generic irreducible~com\-ponent, 
which occurs with multiplicity $1$.

\subsection{}

\textit{From now on,
and until the end of this section,
$k$ is a function field of characteristic $p$.}~We are going to prove 
Conjecture \ref{MAINCONJ} in this case.
We will actually prove a stronger result. 

\begin{theo}
\label{MAINTHM}
Let $\Pi_1$, $\Pi_2$ be cuspidal automorphic representations of
$\GL_n(\AA)$.
Let $\iota$ be a field isomorphism from $\CC$ to $\qlb$
for some prime number $\ell$ different from $p$. 
Suppose that
there~is a finite set $\SS$ of places of $k$
such that, for all $v\notin\SS$, one has:
\begin{enumerate}
\item
the local components $\Pi_{1,v}\otimes_{\CC}\qlb$ and
$\Pi_{2,v}\otimes_{\CC}\qlb$ are unramified,
\item
the characteristic polynomials of their Satake parameters belong to $\zlb[X]$
and have the same reduction mod $\m_\ell$ in $\flb[X]$.
\end{enumerate}
Let $w$ be a finite place. 
Then:
\begin{itemize}
\item
  the representations $\Pi_{1,w}\otimes_{\CC}\qlb$ and
  $\Pi_{2,w}\otimes_{\CC}\qlb$ are integral
\item
  the reductions mod $\ell$ of these representations
have a com\-mon generic irreducible compo\-nent,
\item
and such a generic~com\-ponent is unique and occurs with multiplicity $1$.
\end{itemize}
\end{theo}

\begin{rema}
\label{MAINTHMRMK}
Note that, by taking a bigger $\SS$, we may (and will) assume that
the character $\psi_v$ is trivial on $\Oo_v$ but not on $\p_v^{-1}$
for all $v\notin\SS$.
\end{rema}

First,
let us prove that,
under the assumptions of Theorem \ref{MAINTHM},
for all $v$,
the central characters of $\Pi_{1,v} \otimes_{\CC}\qlb$
and $\Pi_{2,v} \otimes_{\CC}\qlb$
take values in $\overline{\ZZ}{}_\ell^\times$
and are~con\-gruent mod $\m_\ell$.

\begin{lemm}
\label{Animus}
Let $\chi$ be an automorphic character of $\AA^\times/k^\times$
and $U$ be a subgroup of $\CC^\times$.~As\-sume
that there is a finite set $S$ of places of $k$ such that,
for all $v \notin S$,
the local component $\chi_{v}$~is unramified and
takes values in $U$.
Then, for all $v$,
the character $\chi_{v}$ takes values in $U$.
\end{lemm}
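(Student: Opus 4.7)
The plan is to use weak approximation together with the automorphy of $\chi$ to express the unknown value $\chi_{v_0}(a)$ at a bad place $v_0\in S$ as a finite product of values $\chi_v(\alpha)$ at places $v\notin S$, where $U$-valuedness is given by hypothesis. Fix $v_0\in S$ and $a\in k_{v_0}^\times$; the aim is to show $\chi_{v_0}(a)\in U$.

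First I would exploit the continuity of each $\chi_v$ for $v\in S$: since $\CC^\times$ contains no small subgroups and since the open subgroups $1+\p_v^m$ (for $m\>1$) form a neighborhood basis of $1$ in $k_v^\times$, there is an integer $m_v\>1$ such that $\chi_v$ is trivial on $1+\p_v^{m_v}$ for every $v\in S$.

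Next I would invoke weak approximation for the function field $k$ (Stichtenoth Theorem~1.3.1, already used in the proof of Lemma \ref{density}) applied to the finite set $S$ to obtain $\alpha\in k^\times$ with $\alpha a^{-1}\in 1+\p_{v_0}^{m_{v_0}}$ and $\alpha\in 1+\p_v^{m_v}$ for every $v\in S\setminus\{v_0\}$; such $\alpha$ is non-zero, since its $v_0$-component lies close to $a\neq 0$. By construction, $\chi_{v_0}(\alpha)=\chi_{v_0}(a)$ and $\chi_v(\alpha)=1$ for each $v\in S\setminus\{v_0\}$.

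Finally, the automorphy of $\chi$ gives $\prod_v\chi_v(\alpha)=1$, a genuinely finite product since $\alpha$ is an $\Oo_v^\times$-unit at almost every place and $\chi_v$ is unramified for $v\notin S$. Rearranging yields
\[
\chi_{v_0}(a) \;=\; \prod_{v\notin S}\chi_v(\alpha)^{-1},
\]
a finite product whose factors all lie in $U$ by the hypothesis on $\chi_v$ at places outside $S$; since $U$ is a subgroup of $\CC^\times$, the product lies in $U$. There is no serious obstacle in this strategy; the only delicate point is the simultaneous existence of the integers $m_v$ at the finitely many places of $S$, which is handled cleanly by the no-small-subgroup argument of the first step.
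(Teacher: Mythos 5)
Your proof is correct and is essentially the paper's own argument: both use weak approximation at the places of $S$ to produce a global element $\alpha\in k^\times$ whose local values of $\chi_v$ are trivial at $S\setminus\{v_0\}$ and equal to $\chi_{v_0}(a)$ at $v_0$, then invoke triviality of $\chi$ on $k^\times$ to express $\chi_{v_0}(a)$ as a finite product of values in $U$ at places outside $S$. The only cosmetic difference is that you make explicit, via the no-small-subgroups argument, why the kernels of the $\chi_v$ are open, which the paper leaves implicit.
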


\begin{proof}
If $S$ is empty,
there is nothing to prove.
We thus assume that there is a place $w\in S$.
Let $x\in k_w^\times$.
Define an id\`ele $x' \in \AA^\times$ by
\begin{equation*}
x'_v =
\left\{
\begin{array}{ll} x & \text{if $v=w$}, \\
1 & \text{otherwise}.
\end{array}
\right.
\end{equation*} 
The weak approximation theorem implies that there is a $y\in k^\times$ 
such that $y \in \Ker(\chi_v)$ if $v\in S$ and $v\neq w$,
and $yx \in \Ker(\chi_w)$.
We have
\begin{equation*}
\chi_w(x) = \chi(x') 
= \chi(yx') 
= \chi_w(xy) 
\cdot \prod\limits_{v\in S \atop v\neq w} \chi_v(y) 
\cdot \prod\limits_{v\notin S} \chi_v(y).
\end{equation*}
Thanks to the conditions given by the weak approximation theorem, 
this is equal to the product of $\chi_v(y)$ for all $v\notin S$.
(Note that this is a product of finitely many terms,
since $y$ is a unit in~the ring of integers of $k_v$ for almost 
all $v\notin S$.)
The result follows from the fact that,
for such $v$,
one has $\chi_v(y)\in U$. 
\end{proof}

\begin{prop}
\label{PropAnimus}
Let $\chi_1$ and $\chi_2$ be automorphic characters of $\AA^\times/k^\times$,
and fix a field~iso\-mor\-phism $\iota:\CC\to\qlb$.
Assume there is a finite set $S$ of places of $k$ such that,
for all $v \notin S$:
\begin{enumerate}
\item 
the characters $\chi_{1,v}\otimes_{\CC}\qlb$ and $\chi_{2,v}\otimes_{\CC}\qlb$
are unramified and take values in $\overline{\ZZ}{}_\ell^\times$, 
\item
the reductions mod $\ell$ of these characters are equal. 

\end{enumerate}
Then, for all places $v$,
the characters $\chi_{1,v}\otimes_{\CC}\qlb$ and $\chi_{2,v}\otimes_{\CC}\qlb$
take values in $\overline{\ZZ}{}_\ell^\times$
and are~con\-gruent mod $\m_\ell$.
\end{prop}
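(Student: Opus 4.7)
The plan is to apply Lemma \ref{Animus} twice: once to each of $\chi_1$ and $\chi_2$ individually to control their values, and once to the ratio $\chi_1\chi_2^{-1}$ to control the congruence. Since Lemma \ref{Animus} is stated for complex-valued characters while the assumptions are phrased in terms of the $\qlb$-valued characters $\chi_{i,v}\otimes_\CC\qlb$, I will pull the relevant target subgroups back to $\CC^\times$ via $\iota^{-1}$.

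First, I would set $U=\iota^{-1}(\overline{\ZZ}{}_\ell^\times)$, which is a subgroup of $\CC^\times$. By Assumption (1), for every $v \notin S$ and $i\in\{1,2\}$ the local character $\chi_{i,v}\otimes_\CC\qlb$ is unramified and takes values in $\overline{\ZZ}{}_\ell^\times$; equivalently, $\chi_{i,v}$ takes values in $U$. Applying Lemma \ref{Animus} to each $\chi_i$ in turn then yields that $\chi_{i,v}$ is $U$-valued at every place $v$, i.e.\ that $\chi_{i,v}\otimes_\CC\qlb$ is $\overline{\ZZ}{}_\ell^\times$-valued for every place $v$ and every $i$.

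Next, I would consider the automorphic character $\chi=\chi_1\chi_2^{-1}$ of $\AA^\times/k^\times$ and set $U'=\iota^{-1}(1+\m_\ell)$. Since the principal units $1+\m_\ell$ form a subgroup of $\overline{\ZZ}{}_\ell^\times$, the set $U'$ is a subgroup of $\CC^\times$. At every place $v\notin S$, Assumption (2) says that $\chi_{1,v}\otimes_\CC\qlb$ and $\chi_{2,v}\otimes_\CC\qlb$ are unramified with equal reductions in $\flb$, so their ratio $\chi_v\otimes_\CC\qlb$ is unramified and takes values in $1+\m_\ell$. Thus $\chi_v$ is $U'$-valued for all $v\notin S$, and Lemma \ref{Animus} applied to $\chi$ gives that $\chi_v\otimes_\CC\qlb$ takes values in $1+\m_\ell$ everywhere; equivalently, $\chi_{1,v}\otimes_\CC\qlb$ and $\chi_{2,v}\otimes_\CC\qlb$ are congruent mod $\m_\ell$ at every place $v$.

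The argument is essentially mechanical once Lemma \ref{Animus} is in hand, so there is no substantial obstacle. The only real point is to recognize that both desired properties—integrality of values and congruence mod $\m_\ell$—can be encoded as containment of the character's image in an appropriate subgroup of $\qlb^\times$, namely $\overline{\ZZ}{}_\ell^\times$ and $1+\m_\ell$ respectively, which can then be pulled back through $\iota^{-1}$ to fit the hypothesis of the lemma.
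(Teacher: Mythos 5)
Your proof is correct and follows exactly the paper's argument: apply Lemma \ref{Animus} to each $\chi_i$ with $U=\iota^{-1}(\overline{\ZZ}{}_\ell^\times)$, then to $\chi_1\chi_2^{-1}$ with $U=\iota^{-1}(1+\m_\ell)=1+\iota^{-1}(\m_\ell)$. Nothing to add.
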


\begin{proof}
For Assertion 1 of the proposition, 
apply Lemma \ref{Animus} to $\chi_i$ and $U=\iota^{-1}(\A^\times)$.
For~As\-ser\-tion 2, 
apply Lemma \ref{Animus} to $\chi = \chi_1^{\phantom{1}}\chi_2^{-1}$ 
and $U=1+\iota^{-1}(\m_\ell)$.
\end{proof}

\begin{rema}
\label{RemLaf}
Note that 
Theorem \ref{MAINTHM} follows from \cite{Lafforgue} Th\'eor\`eme VI.9
by a global argument in~the spi\-rit of \S\ref{HLTTScholzeVarma}
(see also \cite{HenniartLemaireMSMF11} IV.1.6).
\end{rema}

\subsection{}
\label{polar}

The remainder of this section is devoted to the proof of Theorem 
\ref{MAINTHM}.
By Remark \ref{reductiontowinS}, 
we may and will assume that $w\in\SS$.

Let $\A$ denote the image of $\zlb$ by $\ii^{-1}$ and $\m$ denote the
ima\-ge~of $\m_\ell$ by $\ii^{-1}$. 
Thus $A$ contains the complex $p$th roots of unity and 
the character $\psi$ of Paragraph \ref{par41} takes values in $\A^\times$.
Notice that $A$ and $\m$ are sub-$\ZZ[\mu_{p}]$-mo\-du\-les of $\CC$.

For any place $v$ of $k$ and $i\in\{1,2\}$,
let $\W_{i,v}$ be a function in the Whittaker model $\Ww(\Pi_{i,v},\psi_v)$ 
satisfying the conditions:
\begin{itemize}
\item 
if $v\notin\SS$,
then $W_{i,v}$
is the unique $\GL_n(\Oo_v)$-invariant function such that
$\W_{i,v}(1)=1$ (see~Pa\-ra\-graph \ref{Wpi}),
\item
if $v\in\SS$,
we fix an arbitrary $\A$-valued function $f_v\in\ind^{\P_v}_{\N_v}(\psi_v)$
and let $W_{i,v}\in\Ww(\Pi_{i,v},\psi_v)$ be the unique function 
extending $f_v$ to $G_v$ (see Paragraph \ref{defgen}),
\item
for all $v\in\SS$ such that $v\neq w$, 
we further assume that $f_v(1)=1$.  
\end{itemize}
For $i\in\{1,2\}$, we consider the global Whittaker function
\begin{equation*}
\W_i = \bigotimes\limits_v \W_{i,v} \in\Ww({\Pi}_i,\psi).
\end{equation*}
For $x\in\P(\AA)$, we thus have
\begin{equation*}
\W_i(x) = \prod\limits_{v \in S} f_v(x_v) \cdot \prod\limits_{v \notin S} W_{i,v}(x_v).
\end{equation*}
It follows from Proposition \ref{pagniez} that 
$\W_1$ and $W_2$ take values in $\A$ 
and $\W_1-\W_2$ takes values in $\m$ on $\P(\AA)$.
Let $\h_i\in\Pi_i$ be the automorphic form corresponding 
to $\W_i$ via \eqref{psiwconverse},
that is: 
\begin{equation*}
\h_i(g) = \sum\limits_{\g\in\N_{n-1}(k)\backslash\GL_{n-1}(k)} 
\W_i\left(
\begin{pmatrix}\g&0\\0&1\end{pmatrix}
g\right)
\end{equation*}
for all $g\in\GL_n(\AA)$.
By Theorem \ref{GHthm},
the functions $\h_1$ and $\h_2$ take values in $\A$ 
and $\h_1 - \h_2$ takes values in $\m$ on $\P(\AA)$.

Thanks to Proposition \ref{PropAnimus}, 
the central characters of $\Pi_{1,v}$
and $\Pi_{2,v}$
take values in $\A^\times$
and are con\-gruent mod $\m$ for all $v$.
It follows that $\h_1$ and $\h_2$ take values in $\A$ 
and $\h_1 - \h_2$ takes values in $\m$ on $\Z(\AA)\P(\AA)$,
where $\Z$ is the~cen\-tre of $\GL_n$.
Applying Theorem \ref{MSthm}, 
we deduce that $\W_1$ and $W_2$ take values in $\A$ 
and $\W_1-\W_2$ takes values in $\m$ on $\GL_{n}(\AA)$.

Now let us consider the place $w$.
For $i=1,2$ and $g\in\G_w\subseteq\G(\AA)$, one has:
\begin{eqnarray*}
\W_i(g) &=& \prod\limits_v \W_{i,v}(g_v) \\
&=& \W_{i,w}(g) \cdot \prod\limits_{v\neq w} \W_{i,v}(1) \\
&=& \W_{i,w}(g). 
\end{eqnarray*}
It follows that $\W_{1,w}$ and $W_{2,w}$ take values in $\A$, 
and that $\W_{1,w} - W_{2,w}$ takes values in $\m$ on~$\G_w$.
We~thus~proved that,
given any $\A$-valued function $f_w\in\ind_{\N_w}^{\P_w}(\psi_w)$,
the functions $\W_{1,w}$ and $W_{2,w}$
extending $f_w$ are $\A$-valued.
Pro\-position 
\ref{lem2} thus implies that $\Pi_{1,w} \otimes_\CC\qlb$ and
$\Pi_{2,w}\otimes_\CC\qlb$~are~in\-te\-gral.
Now assume further that $f_w(1)=1$.
Then Theorem~\ref{MAINTHM} follows from Proposition \ref{proplocal}.

\providecommand{\bysame}{\leavevmode ---\ }



\begin{thebibliography}{10}

\bibitem{BZ}
J.~Bernstein and A.~Zelevinsky,
\textit{Representations of the group $\GL_n(F)$ where $F$ is a local 
non-Archi\-medean field},
Uspekhi Mat. Nauk \textbf{31} (1976), n°3, 5--70. 

\bibitem{BHGL2}
C.~Bushnell and G.~Henniart, 
\textit{The local Langlands conjecture for ${\rm GL}(2)$}, 
Grundlehren der mathematischen Wissenschaften \textbf{335}, 
Springer-Verlag, Berlin, 2006.

\bibitem{Cogdell}
J.~Cogdell,
\textit{$L$-functions and converse theorems for $\GL_n$}, 
p.~97--177 in \textit{Automorphic forms and applica\-tions}, 
IAS/Park City Math. Ser.~\textbf{12}, 
Amer.~Math.~Soc., Providence, RI, 2007.

\bibitem{Datnu}
J.~F.  Dat,
\textit{$\nu$-tempered representations of $p$-adic groups. I. $l$-adic case}, 
Duke Math. J. \textbf{126} (2005), n°3, 397--469.  

\bibitem{GoldfeldHundley}
D.~Goldfeld and J.~Hundley,
\textit{Automorphic representations and L-functions for the general
linear group},
Cambridge Studies in Advanced Mathematics,
Cambridge University Press, 2011. 

\bibitem{HLTT}
M.~Harris,
K.-W.~Lan, 
R.~Taylor
and J.~Thorne, \textit{On the rigid cohomology of certain 
  Shimura~va\-rie\-ties}, 
Res. Math. Sci.  {\bf 3}, n°37 (2016).

\bibitem{HelmDMJ16}
D.~Helm, 
\textit{Whittaker models and the integral Bernstein center for $\GL_n$}, 
Duke Math. J. \textbf{165} (2016), n$^\circ$9, 1597--1628.

\bibitem{HelmMossINVENT18}
D.~Helm and G.~Moss,
\textit{Converse theorems and the local Langlands correspondence in families}, 
Invent. Math. \textbf{214} (2018), n$^\circ$2, 999--1022. 

\bibitem{HenniartLemaireMSMF11}
G.~Henniart and B.~Lemaire,
\textit{Changement de base et induction automorphe pour en caractéristique
  non nulle},
Mémoires de la SMF {\bf 124} (2011).

\bibitem{JS1}
H.~Jacquet and J.~A.~Shalika,
\textit{On Euler products and the classification of automorphic forms I}, 
Amer. J. Math. \textbf{103} (1981), n$^\circ$3, 499--558. 

\bibitem{JS2}
\bysame ,
\textit{On Euler products and the classification of automorphic forms II}, 
Amer. J. Math. \textbf{103} (1981), n$^\circ$4, 777--815. 

\bibitem{Labesse}
  J.-P.~Labesse, \textit{Changement de base CM et s\'eries discr\`etes},
  p.~429--470 in \textit{On the stabilization of the trace formula},
  Int. Press, Somerville, MA, 2011. 
  
\bibitem{Lafforgue}
  L.~Lafforgue,
  \textit{Chtoucas de Drinfeld et correspondance de 
    Langlands},
  Invent. Math. {\bf 147} (2002), 1--241.  
   
\bibitem{MaMo}
  N.~Matringe and G.~Moss,
  \textit{The Kirillov model in families},
  Monatsh.  Math. {\bf 198}, 393--410 (2022). 

\bibitem{MSc}
  A.~M\'\i nguez and V.~S\'echerre,
  \textit{Repr{\'e}sentations lisses modulo {$\ell$} de {${\rm GL}_m(\D)$}}, 
{Duke Math. J.} {\bf 163} (2014), 795--887.

\bibitem{MSautomodl}
  \bysame ,
  \textit{Local transfer for quasi-split classical groups and congruences mod 
    $\ell$},
  arXiv:2302.04532. 
  
\bibitem{PS}
I.~Piatetski-Shapiro, \textit{Multiplicity one theorems}, in
\textit{Automorphic forms, representations and L-func\-tions}, 
Proc. Sym\-pos. Pure Math. \textbf{33},
Part 1, 209--212,
Amer. Math.  Soc.,~Pro\-vidence, R.I., 1979. 

\bibitem{PlatonovRapinchuk}
V.~Platonov and A.~Rapinchuk, \textit{Algebraic groups and number theory}, 
Pure and Applied Mathematics \textbf{139},
Academic Press, Inc., Boston, MA, 1994.

\bibitem{RaVa}
D.~Ramakrishnan and R.~J.~Valenza, \textit{Fourier analysis on number fields}, 
Graduate Texts in Mathe\-ma\-tics  {\bf 186}, Springer, 1999.

\bibitem{Scholze}
  P.~Scholze,
  \textit{On torsion in the cohomology of locally symmetric
    varieties},
  Annals of Math. (2) {\bf 182} (2015), n°3, 945--1066.

\bibitem{Shintani}
T.~Shintani, 
\textit{On an explicit formula for class-$1$ Whittaker functions on 
$\GL_n$ over $\p$-adic fields}, Proc. Japan Acad. \textbf{52} (1976), 
180--182.

\bibitem{Stichtenoth}
H.~Stichtenoth, 
\textit{Algebraic function fields and codes}, 
Graduate Texts in Mathematics {\bf 254},
Springer, 2009. 

\bibitem{Varma}
I.~Varma, 
\textit{Local-global compatibility for regular algebraic cuspidal 
automorphic representations when $\ell\neq p$}, 
to appear in Forum of Mathematics Sigma.

\bibitem{VigGL2}
  M.-F. Vign\'eras,
  \textit{Repr\'esentations modulaires de ${\rm GL}(2,F)$ en
    caract\'eristique $l$, $F$ corps $p$-adique, $p \neq l$},
  Compositio Math. \textbf{72} (1989), n°1, 33--66.
  
\bibitem{Vigb}
  \bysame ,
  \textit{Repr\'esentations {$l$}-modulaires d'un
  groupe r\'eductif {$p$}-adique avec {$l\ne p$}}, Progress in Mathematics,
  vol. 137, Birkh\"auser Boston Inc., Boston, MA, 1996.

\bibitem{Vigw}
  \bysame ,
  \textit{On highest Whittaker models and integral structures}, 
 {Contributions to Automorphic forms, Geometry and Number theory: 
   Shalikafest 2002}, John Hopkins Univ. Press, 2004, 773--801.

\end{thebibliography}
\end{document}